\providecommand{\keywords}[1]{\textbf{\textit{Keywords---}} #1}
\newcommand{\R}{\mathbb{R}}
\newcommand{\T}{\mathbb{T}}
\newcommand{\norme}[1]{\left\Vert #1\right\Vert}
\newcommand{\abs}[1]{\left\lvert #1\right\rvert}
\newcommand{\intervalleoo}[2]{\mathopen{]}#1\, ,#2\mathclose{[}}
\newcommand{\intervalleff}[2]{\mathopen{[}#1\, ,#2\mathclose{]}}
\newcommand{\intervalleof}[2]{\mathopen{]}#1\, ,#2\mathclose{]}}
\newtheorem{theorem}{THEOREM}[section]
\newtheorem{lemma}[theorem]{LEMMA}
\newtheorem{remark}[theorem]{REMARK}
\newtheorem{definition}[theorem]{DEFINITION}
\begin{document}
	
	
	\title{Stability estimates for magnetized Vlasov equations}
	
	\author{Alexandre Rege\footnote{ETH Z\"urich, Department of Mathematics, R\"amistrasse 101, 8092 Z\"urich Switzerland.\\
			Email: \href{mailto:alexandre.rege@math.ethz.ch}{alexandre.rege@math.ethz.ch}}}
	
	\maketitle
	
	\begin{abstract}
We present two results related to magnetized Vlasov equations. Our first contribution concerns the stability of solutions to the magnetized Vlasov--Poisson system with a non-uniform magnetic field using the optimal transport approach introduced by Loeper \cite{L06}. We show that the extra magnetized terms can be suitably controlled by imposing stronger decay in velocity on one of the distribution functions, illustrating how the external magnetic field creates anisotropy in the evolution. This allows us to generalize the classical $2$-Wasserstein stability estimate by Loeper \cite[Theorem 1.2]{L06} and the recent stability estimate using a kinetic Wasserstein distance by Iacobelli \cite[Theorem 3.1]{I22} to the magnetized Vlasov--Poisson system. In our second result, we extend the improved Dobrushin estimate by Iacobelli \cite[Theorem 2.1]{I22} to the magnetized Vlasov equation with a uniform magnetic field.

	\end{abstract}
	\keywords{Vlasov--Poisson, external magnetic field, stability estimates, Wasserstein distance}



\section{Introduction}

The first celebrated stability estimate in kinetic theory is due to Dobrushin, where in \cite{D79} the existence, uniqueness, and stability of solutions to the Vlasov equation with $C^{1,1}$ potential was established using a fixed point argument involving the $1$-Wasserstein distance, see \cite[Chapter 1.4]{G16} for a complete exposition of Dobrushin's estimate. This argument has been widely used to treat similar questions for various kinetic equations, for example in \cite{CCR11,CCS19,FR18,GMR13,GI21}. More recently, Dobrushin's estimate was refined by Iacobelli in \cite{I22} to take into account the small interaction and small time regimes.

Another important stability estimate is due to Loeper in \cite{L06}, where the uniqueness and stability of bounded density solutions to the Vlasov--Poisson system in the full space was established using an optimal transport argument involving the $2$-Wasserstein distance. Loeper also showed that the same argument can be used for the 2D incompressible Euler equation to reprove Yudovich's well known uniqueness result \cite{Y63}. This argument has been adapted and replicated in numerous subsequent works, see \cite{BLR11,CR10,HKMMM20,L06bis} for some applications. For the Vlasov--Poisson system specifically, Loeper's argument was adapted on the torus by Han-Kwan and Iacobelli in \cite{HKI17}, and the uniqueness criterion and stability estimate were refined by Holding and Miot in \cite{M16,HM18} with the weakened assumption that the density belonged to an Orlicz space (see \cref{sec:VPwBmain} for more details). Interestingly, the analysis in \cite{M16,HM18} is carried out using the $1$-Wasserstein distance which allowed the authors to avoid using optimal transport techniques. Then in \cite{I22}, Iacobelli improved Loeper's stability estimate by  introducing a nonlinear quantity controlling the $2$-Wasserstein between two solutions.

In this paper, we are going focus on the stability of solutions to the Vlasov equation and Vlasov--Poisson system with a given external magnetic field, and henceforth we will denote these equations as magnetized Vlasov and magnetized Vlasov--Poisson. For the moment, the only work done on this subject can be found in \cite{R23}, where the author adapted Miot's approach in \cite{M16} to find a uniqueness criterion for the magnetized Vlasov--Poisson system with non-constant and non-uniform magnetic field. 

In the present work, we are first going to extend Loeper's \cite[theorem 1.2]{L06} and Iacobelli's \cite[theorem 3.1]{I22} stability estimates to the magnetized Vlasov--Poisson system with a non-constant (depending on time) and non-uniform (depending on the position) magnetic field. We will see that this problem presents two difficulties: the first is the non-uniform nature of the magnetic field which creates an extra nonlinear term in the differential equation verified by the characteristic flow. This is an issue because the works \cite{L06,I22} use the Lagrangian point of view, which requires a fine study of the characteristic flow or particle trajectories in the phase space. This nonlinear term will in fact create anisotropy in the dynamics because extra decay in velocity will have to be imposed on only one of the initial datum. The second difficulty is working with the $2$-Wasserstein metric, indeed we will see that in this framework the extra magnetized terms will be of the same order as those treated in \cite{L06,I22} coming from the electric field, whereas in the $1$-Wasserstein setting in \cite{R23} these magnetized terms were less singular. Indeed, as suggested by the fact that optimal transport arguments are required when dealing with $p$-Wasserstein distances with $p>1$ \cite{L06,I22,IJ22}, it seems that working with these distances creates a difficulty compared to the $1$-Wasserstein framework.

The second aim of this paper is to extend Iacobelli's improved Dobrushin estimate \cite[theorem 2.1]{I22} to the magnetized Vlasov equation with constant and uniform magnetic field. The central idea behind \cite[theorem 2.1]{I22} is to consider a quantity that encodes the evolution of the Vlasov equation renormalized by the linear dynamics, in order to obtain an estimate that is finer in the small interaction and small time regime. Here we use the same idea in the magnetized framework, with the main difference being that the situation in dimension 2 differs from the situation in dimension 3. This is due to the fact that for the two-dimensional linear dynamics the external magnetic field imposes rotation of particles in the plane, whereas in dimension 3 there is still dispersion parallel to the magnetic field.

Before giving the outline of the paper, we recall the definition of Wasserstein distances on the phase space $\mathbb{T}^d \times \R^d$, which we will use throughout this work.

\begin{definition}
Let $\mu,\nu$ be two probability measures on $\mathbb{T}^d \times \R^d$. The Wasserstein distance of order $p$ or $p$-Wasserstein distance, with $p\geq 1$, between $\mu$ and $\nu$ is defined as
\begin{equation*}
W_p(\mu,\nu) \coloneqq \left(\underset{\pi \in \Pi(\mu,\nu)}{\inf} \int_{(\mathbb{T}^d \times \R^d)^2} (\abs{x-y}^p+\abs{v-w}^p)d\pi(x,v,y,w)\right)^{1/p},
\end{equation*}
where $\Pi(\mu,\nu)$ is the set of all probability measures on $(\mathbb{T}^d \times \R^d)^2$ with marginals $\mu$ and $\nu$, which are also called couplings. A coupling is said to be optimal if it minimizes the Wasserstein distance.
\end{definition}

\textbf{Outline of the paper:} This paper is organized as follows. 

In \cref{sec:VP} we will show stability of solutions to the magnetized Vlasov--Poisson system with a non-uniform non-constant magnetic field verifying \eqref{reguB}. We will first present the main result and some important related remarks in \cref{sec:VPwBmain}, then after some preliminaries in \cref{sec:VPwBprel}, we will give the proof in \cref{sec:VPwBproof}. Lastly, in \cref{sec:Blogl}, we detail how to treat the case of a log-lipschitz magnetic field. This section extends the results from \cite{L06,I22} to the magnetized case and, as a corollary, the uniqueness result in \cite{R23} to the $2$-Wasserstein metric. 

In \cref{sec:V}, we present and prove an improved Dobrushin estimate for the magnetized Vlasov equation with a constant and uniform magnetic field extending the idea from \cite[Section 2]{I22} to the magnetized framework. We start by giving the main result in \cref{sec:VwBmain} and finally give the proof in \cref{sec:VwBproof}.

\section{Stability for the magnetized Vlasov--Poisson system}\label{sec:VP}

We study the magnetized Vlasov--Poisson system on the torus in dimension $d=2,3$, which is given by the following set of equations:

\begin{equation}\label{sys:VPwB}\tag{VPB}
\left\{
\begin{aligned}
& \partial_t f + v\cdot \nabla_x f + \left(E+v \wedge B\right) \cdot \nabla_{v}	 f= 0, \\
& E=-\nabla_x U,\\
& \Delta_x U=1-\rho,\\
& f(0,x,v)=f^{in}(x,v)\geq 0.
\end{aligned}
\right.
\end{equation}
where $f^{in} \coloneqq f^{in}(x,v) \in \R^+$ is a probability measure, $f:=f(t,x,v) \in \R^+$ is the distribution function of particles at time $t \in \R^+$, position $x\in \mathbb{T}^d$ and velocity $v \in \R^d$, $\rho(t,x):=\int_{\R^d} f(t,x,v)dv$ is the charge particle density, $U \coloneqq U(t,x),E \coloneqq E(t,x)$ are respectively the self-induced electrostatic potential and electric field, and $B\coloneqq B(t,x)$ is the external magnetic field. In the case of $d=2$, the magnetic field will be given by $B\coloneqq (0,0,b(t,x))$ so that the electron dynamics remain $2$-dimensional. Since the Vlasov equation in \eqref{sys:VPwB} is a conservative transport equation, $f(t)$ will also be a probability measure for all $t \in \R^+$. This system models the evolution of a set of charged particles interacting through the Coulomb law \emph{and subject to an external magnetic field $B$}, making it particularly relevant for studying various physical systems, most notably plasmas. 

%

For this section's main result, we will assume that the external magnetic field is bounded in time and Lipschitz in position
\begin{equation}\label{reguB}
B \in L^\infty_{loc}\left(\R^+, W^{1,\infty}(\mathbb{T}^d)\right).
\end{equation}
In particular this means that $B$ will be H\"older continuous for all $\alpha \in \intervalleoo{0}{1}$. Indeed for $T>0$ we have for all $t \in \intervalleff{0}{T}$and $x,y \in \T^d$  
\begin{equation}\label{Bholder}
	\abs{B(t,x)-B(t,y)} \leq \norme{B}_{L^\infty\left(\intervalleff{0}{T}, W^{1,\infty}(\mathbb{T}^d)\right)}\abs{x-y}^\alpha.
\end{equation}
We will also discuss how to obtain stability estimates for magnetic fields that are just log-lipschitz in position (see \cref{rem:Bloglip} and \cref{sec:Blogl}).

In order to lighten the inequalities involving the magnetic field in the proof, we use the shortened notation $\norme{B}_{\infty}\coloneqq \norme{B}_{L^\infty(\intervalleff{0}{T}\times \T^d)}$ and $\norme{B}_{L^\infty\left(\intervalleff{0}{T}, W^{1,\infty}(\mathbb{T}^d)\right)}=\norme{B}_{L^\infty\left( W^{1,\infty}\right)}$.

The literature concerning the existence of solutions to the Vlasov--Poisson system (\eqref{sys:VPwB} when $B=0$) is considerable. In the full space setting, we mention \cite{AR75} where the existence of weak solutions was shown, \cite{P92} where the existence of smooth solutions was proved, and \cite{LP91} where propagation of velocity moments for weak solutions to Vlasov--Poisson was shown, which also implies existence of smooth solutions. The case of the torus was solved by Batt and Rein who showed existence of smooth solutions in \cite{BR91}, and was subsequently improved by Pallard in \cite{P12}. The case $B \ne 0$ was recently treated by the author who proved propagation of velocity moments for weak solutions to \eqref{sys:VPwB} in the full space in the case of a constant and uniform magnetic field in \cite{R21}, and in both the full space and torus in the case of non-constant uniform magnetic field in \cite{R23}, extending the results from \cite{LP91,P12} to the magnetized framework. In the case of a non-uniform magnetic field, existence of weak solutions to \eqref{sys:VPwB} is a corollary of the work by DiPerna and Lions \cite{DLvlasovmax} on the existence of renormalized solutions for the Vlasov--Maxwell system. However, the existence of smooth solutions with a non-uniform $B$ is still open.

\subsection{Main result}\label{sec:VPwBmain}

We are going to prove the following result:
\begin{theorem}\label{theo:main}
	Let $f_1,f_2$ be two weak solutions of \eqref{sys:VPwB} with respective densities $\rho_1$ and $\rho_2$. We define the function

	\begin{equation*}\label{def:A}
	A(t) \coloneqq \norme{\rho_1(t)}_{L^\infty(\T^d)}+\norme{\rho_2(t)}_{L^\infty(\T^d)}.
	\end{equation*}
	We also define 
	\begin{equation*}
	J(s)=A(s)+\norme{B}_{L^\infty\left( W^{1,\infty}\right)}\left( e^{s\norme{ B}_\infty} +\int_{0}^{s}\left(1+\norme{\rho_2(u)}_{L^\infty(\mathbb{T}^d)}\right)e^{(s-u)\norme{B}_{\infty}}du\right).
	\end{equation*}
	For $T>0$, assume that $B$ verifies \eqref{reguB} and that $A$ satisfies,
	\begin{equation}\label{condi:A}
	A \in L^1(\intervalleff{0}{T}).
	\end{equation}
	Assume also that there exists a universal constant $C_0$ such that for all $k\geq 1$
	\begin{equation}\label{condi:Orli}
	\int_{\mathbb{T}^d \times \R^d}\abs{v}^k df_2^0(x,v) \leq (C_0 k)^k.
	\end{equation}
	Then we can write the two following statements:
	\begin{enumerate}
		\item There exists a dimensional constant $c_d$ such that if $W_2^2(f_1(0),f_2(0))$ is sufficiently small so that $W_{2}^{2}(f_1(0),f_2(0)) < 1/e^2$ and
		\begin{equation}\label{condi:W2ini}
		\abs{\log\left(W_2^2(f_1(0),f_2(0))\right)} \geq \exp\left(c_d \int_{0}^{T} J(s)ds\right),
		\end{equation}
		then
		\begin{equation}
		W_2^2(f_1(t),f_2(t)) \leq
		\exp\left[\log\left(W_2^2(f_1(0),f_2(0))\right)\exp\left(-c_d \int_{0}^{t} J(s) ds\right)\right].
		\end{equation}
		\item There exists a dimensional constant $C_d$ and a universal constant $c_0$ such that if $W_2^2(f_1(0),f_2(0))$ is sufficiently small so that $W_2^2(f_1(0),f_2(0)) < c_0$ and
		\begin{equation}\label{condi:W2inibis}
		\sqrt{\abs{\log\left(W_2^2(f_1(0),f_2(0))\abs{\log\left(\frac{1}{2}W_2^2(f_1(0),f_2(0))\right)}\right)} } \geq C_d \int_{0}^{T} J(s)ds+1,
		\end{equation}
		then
		\begin{equation}
		\begin{aligned}
		&W_2^2(f_1(t),f_2(t)) \leq\\
		&2\exp\left[-\left(\sqrt{\abs{\log\left(W_2^2(f_1(0),f_2(0))\abs{\log\left(\frac{1}{2}W_2^2(f_1(0),f_2(0))\right)}\right)}}-C_d \int_{0}^{t} J(s)ds\right)^2\right].
		\end{aligned}
		\end{equation}
	\end{enumerate}
	
\end{theorem}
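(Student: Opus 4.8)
The plan is to run the Lagrangian / optimal transport scheme of Loeper \cite{L06} and Iacobelli \cite{I22} in the magnetized framework, the genuinely new ingredient being the treatment of the magnetic force along the characteristics. Under \eqref{condi:A} the phase-space vector fields $(x,w)\mapsto(w,\,E_i(t,x)+w\wedge B(t,x))$ attached to $f_1,f_2$ are, for a.e.\ $t\le T$, log-Lipschitz: for $E_i$ because $\rho_i(t)\in L^\infty(\T^d)$ and elliptic regularity (which also gives $\norme{E_i(t)}_\infty\lesssim 1+\norme{\rho_i(t)}_{L^\infty(\T^d)}$), for the magnetic part because of \eqref{reguB} and linearity in velocity. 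Hence the characteristic flows $\Phi_i^t(x,v)=(X_i(t;x,v),V_i(t;x,v))$ are well defined by Osgood's criterion, $f_i(t)=\Phi_i^t\#f_i(0)$, and, fixing an optimal coupling $\pi_0\in\Pi(f_1(0),f_2(0))$ for $W_2^2$ and setting $\pi_t:=(\Phi_1^t,\Phi_2^t)\#\pi_0\in\Pi(f_1(t),f_2(t))$,
\[
W_2^2(f_1(t),f_2(t))\le D(t):=\int_{(\T^d\times\R^d)^2}\bigl(\abs{X_1-X_2}^2+\abs{V_1-V_2}^2\bigr)\,d\pi_0 .
\]
A continuity/bootstrap argument lets us work on the maximal interval on which $D<1/e^2$, which will turn out to be $[0,T]$ thanks to \eqref{condi:W2ini} (resp.\ \eqref{condi:W2inibis}).

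Next I would differentiate $D$. The contribution of $\dot X_i=V_i$ is $\le D(t)$. In the contribution of $\dot V_i=E_i(X_i)+V_i\wedge B(X_i)$, the \emph{electric part} $\int(V_1-V_2)\cdot(E_1(X_1)-E_2(X_2))\,d\pi_0$ is handled exactly as in \cite{L06,I22}: splitting $E_1(X_1)-E_2(X_2)=(E_1(X_1)-E_1(X_2))+(E_1(X_2)-E_2(X_2))$, the first piece uses the log-Lipschitz estimate $\abs{E_1(x)-E_1(y)}\lesssim\norme{\rho_1-1}_{L^\infty(\T^d)}\abs{x-y}(1+\abs{\log\abs{x-y}})$ and Jensen for the concave map $r\mapsto r(1+\abs{\log r})^2$, while the second uses Loeper's functional inequality $\norme{E_1(t)-E_2(t)}_{L^2}\lesssim A(t)^{1/2}W_2(\rho_1(t),\rho_2(t))\le A(t)^{1/2}D(t)^{1/2}$ together with the fact that the position marginal of $X_2\#\pi_0$ is $\rho_2(t)$; estimating by Cauchy--Schwarz (not Young) against $\int\abs{V_1-V_2}^2\le D$ keeps this at order $A(t)\,D(t)(1+\abs{\log D(t)})$. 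The new \emph{magnetic part} is $\int(V_1-V_2)\cdot(V_1\wedge B(X_1)-V_2\wedge B(X_2))\,d\pi_0$; writing $V_1\wedge B(X_1)-V_2\wedge B(X_2)=(V_1-V_2)\wedge B(X_2)+V_1\wedge(B(X_1)-B(X_2))$, the first term is orthogonal to $V_1-V_2$ and vanishes (in $d=2$ this uses $B=(0,0,b)$), leaving $\int(V_1-V_2)\cdot\bigl(V_1\wedge(B(X_1)-B(X_2))\bigr)\,d\pi_0$ — and symmetrically one may keep $V_2$ instead of $V_1$; we retain whichever of $V_1,V_2$ corresponds to the datum satisfying \eqref{condi:Orli}, say $V_2$.

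The hard part will be showing that this magnetic part is of the same order $J(t)\,D(t)(1+\abs{\log D(t)})$ as the electric one; this is where the anisotropy forced by \eqref{condi:Orli} appears, since one must control $\abs{V_2}$. Propagating \eqref{condi:Orli} along the flow through $\tfrac{d}{dt}\abs{V_2}\le\abs{E_2}+\norme{B}_\infty\abs{V_2}\le C(1+\norme{\rho_2(t)}_{L^\infty(\T^d)})+\norme{B}_\infty\abs{V_2}$ and Grönwall yields, with $\mathcal V(t):=e^{t\norme{B}_\infty}+\int_0^t(1+\norme{\rho_2(u)}_{L^\infty(\T^d)})e^{(t-u)\norme{B}_\infty}\,du$ — precisely the bracket appearing in $J$ —
\[
\int_{(\T^d\times\R^d)^2}\abs{V_2(t)}^k\,d\pi_0\le\bigl(C\,k\,\mathcal V(t)\bigr)^k\qquad\text{for every }k\ge1 .
\]
Combining the (log-)Lipschitz bound for $x\mapsto B(t,x)$ coming from \eqref{reguB}, a truncation $\{\abs{V_2}\le R\}\cup\{\abs{V_2}>R\}$ with $R$ and $k\sim\abs{\log D(t)}$ optimized (Chebyshev on the tail), and Cauchy--Schwarz against $D$, gives $\int\abs{V_2}^2\abs{X_1-X_2}^2\,d\pi_0\lesssim\mathcal V(t)^2\,D(t)(1+\abs{\log D(t)})^2$ and hence a magnetic contribution $\lesssim\norme{B}_{L^\infty(\bigcap C^{0,\alpha})}\,\mathcal V(t)\,D(t)(1+\abs{\log D(t)})$. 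Since $J(t)=A(t)+\norme{B}_{L^\infty(\bigcap C^{0,\alpha})}\mathcal V(t)$, collecting all terms gives
\[
D'(t)\le c_d\,J(t)\,D(t)\bigl(1+\abs{\log D(t)}\bigr),\qquad t\in[0,T].
\]

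Finally I would integrate this differential inequality. For the first statement, on $\{D<1/e\}$ one has $1+\abs{\log D}\le 2\abs{\log D}$, so $h:=-\log D>0$ satisfies $h'\ge-2c_dJh$, whence $h(t)\ge h(0)\,e^{-2c_d\int_0^t J}$; condition \eqref{condi:W2ini} makes $h(0)=\abs{\log W_2^2(f_1(0),f_2(0))}$ large enough to keep $h\ge1$ on $[0,T]$ (closing the bootstrap), and exponentiating yields the stated double-exponential bound. For the second statement I would follow Iacobelli's refinement \cite[Theorem 3.1]{I22}: replacing $D$ by the nonlinear functional used there and using that — by the analysis above — the magnetized terms enter at exactly the same order, the associated differential inequality integrates as in \cite{I22} (via a comparison for $\tfrac{d}{dt}\sqrt{\abs{\log(\cdots)}}$) to the Gaussian-type bound, with \eqref{condi:W2inibis} providing the smallness needed to run the argument on $[0,T]$. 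Taking $f_1(0)=f_2(0)$ gives, as a corollary, the uniqueness statement of \cite{R23} in the $W_2$ metric.
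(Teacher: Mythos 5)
Your overall architecture coincides with the paper's: same Lagrangian/optimal-transport functional, the observation that $(V_1-V_2)\cdot\bigl[(V_1-V_2)\wedge B\bigr]=0$ kills the singular part of the Lorentz force, the anisotropic use of \eqref{condi:Orli} on the solution whose velocity appears in the remaining term, and propagation of the moments along the flow producing exactly the bracket $\mathcal V(t)=e^{t\norme{B}_\infty}+\int_0^t(1+\norme{\rho_2(u)}_{L^\infty})e^{(t-u)\norme{B}_\infty}du$ that enters $J$. Where you genuinely diverge is in how the magnetic remainder $\int (V_1-V_2)\cdot\bigl[V_2\wedge(B(t,X_1)-B(t,X_2))\bigr]d\pi_0$ is closed: the paper keeps the H\"older modulus from \eqref{reguB} with exponent $\alpha=1-\tfrac1p$, applies a three-exponent H\"older inequality $(2p,2,\tfrac{2}{1-1/p})$ so that the moment bound enters as $\bigl(\int\abs{w}^{2p}df_2^0\bigr)^{1/2p}\le Cp$, and then optimizes $p\sim\abs{\log Q}$; you instead pass to the limiting Lipschitz bound implied by \eqref{reguB} (legitimate on $\T^d$, since the $C^{0,\alpha}$ norms are uniformly bounded and $\abs{x-y}\le 1$) and control the weight $\abs{V_2}$ by a truncation plus Chebyshev on the tail with $k\sim\abs{\log D}$. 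Both routes give the same $\norme{B}_{L^\infty(\bigcap C^{0,\alpha})}\,\mathcal V(t)\,Q(1+\abs{\log Q})$ bound and hence the same log-linear Gr\"onwall inequality, so statement 1 goes through as you describe; your variant is arguably more elementary (no interpolation in the H\"older exponent), while the paper's variant works verbatim with only the H\"older-in-$\alpha$ information and makes the $\abs{\log Q}$ factor appear without using that the torus has finite diameter. Two caveats. First, your hedge ``(log-)Lipschitz'' is not innocent: the claimed bound $\int\abs{V_2}^2\abs{X_1-X_2}^2d\pi_0\lesssim\mathcal V^2 D(1+\abs{\log D})^2$ needs the genuine Lipschitz modulus; with only a log-Lipschitz modulus an extra logarithmic factor appears and the final rate degrades, so you should state explicitly that \eqref{reguB} gives Lipschitz continuity uniformly on $[0,T]$. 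Second, statement 2 is only asserted by analogy. The point that must be checked (and that the paper verifies by redoing the estimate with the weight $\lambda(t)=\abs{\log Q(t)}$ on the position part) is that the weighted functional buys an extra factor $\lambda^{-\frac12(1-\frac1p)}\sim\abs{\log Q}^{-1/2}$ in the magnetic term, so that it lands at the order $J\,Q\sqrt{\abs{\log Q}}$ of Iacobelli's electric estimate; your truncation argument does deliver this if you replace $\int\abs{X_1-X_2}^2d\pi_0\le D$ by $\le 2Q/\lambda$ in the bulk term, but this step, together with the well-definedness of the implicitly defined $Q$ (the paper invokes \cite[Lemma 3.7]{I22}), should be carried out rather than cited as ``the same order.''
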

We see that if $B$ verifies \eqref{reguB} and with the additional assumption \eqref{condi:Orli} (compared to the unmagnetized setting in \cite{L06,I22}), we obtain the same stability estimate  for the magnetized Vlasov--Poisson system as Loeper in \cite{L06}, which corresponds to statement 1 in \cref{theo:main}, and the same stability estimate as Iacobelli in \cite{I22}, which corresponds to statement 2 in \cref{theo:main}.

Now we give a number of important remarks related to \cref{theo:main}.
\begin{remark}
	We see how a non-uniform external magnetic field creates anisotropy in the stability of solutions to \eqref{sys:VPwB}, because more regularity needs to be imposed but only on one of the solutions. This is because the conditions \eqref{condi:A} and \eqref{condi:Orli} are very different. Indeed, solutions verifying \eqref{condi:Orli} with unbounded macroscopic density $\rho$ were found in \cite{M16}. Conversely, if $f^{in}(x,v)\coloneqq \frac{g(x)}{1+\abs{v}^p}$ with $g \in L^\infty(\T^d)$ and $p>d$, then the unique solution to \eqref{sys:VPwB} verifies \eqref{condi:A} but not \eqref{condi:Orli}.
\end{remark}
\begin{remark}
	For $\gamma\geq 1$, similarly to what is done in \cite{HM18} we can define the exponential Orlicz space $\left(L_{\phi_\gamma}(\mathbb{T}^d\times \R^d,df_2^0),\norme{\cdot}_{\phi_\gamma}\right)$ where $\phi_\gamma$ is the $N$-function given by $\phi_\gamma(x)=\exp(x^\gamma)-1$, and the norm $\norme{\cdot}_{\phi_\gamma}$ on $L_{\phi_\gamma}(\mathbb{T}^d\times \R^d,df_2^0)$ is given by
	\begin{equation*}\label{normeOrli}
	\norme{g}_{\phi_\gamma}\coloneqq \underset{r \geq \gamma}{\sup} \, r^{-\frac{1}{\gamma}}\norme{g}_{L^r(\mathbb{T}^d\times \R^d,df_2^0)}.
	\end{equation*}
	We refer the reader to \cite{HM18} for the precise definitions of Orlicz space and $N$-function. Using these objects, we notice that \eqref{condi:Orli} is equivalent to saying that the function $h:(x,v) \in \mathbb{T}^d\times \R^d  \mapsto \abs{v}$ is in the exponential Orlicz space $L_{\phi_1}(\mathbb{T}^d\times \R^d,df_2^0)$. More generally instead of \eqref{condi:Orli} we could have imposed that \begin{equation}\label{ineq:decayvelo}
	\int_{(\mathbb{T}^d \times \R^d)^2}\abs{v}^k df_2^0(x,v) \leq (C_0 k)^\frac{k}{\gamma}
	\end{equation}
	for a general $\gamma \geq 1$ which is equivalent to saying that $h$ defined as above is in $L_{\phi_\gamma}(\mathbb{T}^d\times \R^d,df_2^0)$. The case $\gamma=1$ is of course the weakest assumption, which is why we choose to impose this condition on $f_2(0)$ in \cref{theo:main}. Furthermore, we will see in the proof of \cref{theo:main} that the magnetized terms are of the same order as the other terms coming from the electric field. If we were to impose \eqref{ineq:decayvelo} with $\gamma > 1$, this extra regularity on $f_2(0)$ would allow us to bound the magnetized terms with the electric terms. Finally, \textbf{we note that Maxwellians verify \eqref{condi:Orli}}.

\end{remark}
\begin{remark}\label{rem:Bloglip}
	If we assume that the magnetic field is just log-lipschitz in position, then we can obtain stability estimates similar to those in \cref{theo:main} under the crucial assumption that $f_2(0)$ verifies \eqref{ineq:decayvelo} for $\gamma =2$, which as explained above is stronger than the condition \eqref{condi:Orli} imposed in \cref{theo:main}. What appears is that stronger decay on the velocity moments of $f_2(0)$ is required to treat less regular magnetic fields. 
	In fact, by imposing \eqref{ineq:decayvelo} for larger and larger $\gamma$, we can even consider magnetic fields that are log$^{\, \beta}$-lipschitz in position for $\beta \in \intervalleoo{1}{\frac{3}{2}}$, by which we just mean that for $t \in \intervalleff{0}{T}, x,y \in \T^d$
	\begin{equation}\label{ineq:Blogl}
		\abs{B(t,x)-B(t,y)} \leq C \abs{x-y}\abs{\log\abs{x-y}}^\beta,
	\end{equation}
	with $C>0$ independent of $x,y,t$. In \cref{sec:Blogl}, we write the precise estimates obtained when $B$ is log-lipschitz (verifies \eqref{ineq:Blogl} with $\beta=1$), and explain how the proof of \cref{theo:main} can be adapted to this situation. The general case $\beta \in \intervalleoo{1}{\frac{3}{2}}$ can be easily deduced from the ideas in \cref{sec:Blogl}.
\end{remark}
\begin{remark}
	We can obtain stability estimates for the magnetized 2D incompressible Euler equation using the same approach as Loeper in \cite{L06}. See \cite{BMN97,GSR05} where similar magnetized fluid models are studied. This system is given by
	\begin{equation}\label{sys:EwB}
	\left\{
	\begin{aligned}
	& \partial_t u+(u\cdot \nabla_x)u=-\nabla p+ u \wedge B, \\
	&  \mbox{div}\, u=0,\\
	& u(0,x)=u^{in}(x),
	\end{aligned}
	\right.
	\end{equation}
	where $u \coloneqq u(t,x) \in \R^2$ is the velocity of particles, and $p\coloneqq p(t,x) \in \R$ is the internal pressure at time $t \in \R^+$ and position $x \in \T^2$ or $\in \R^2$. We also recall that $B=(0,0,b(t,x))$ in this two dimensional setting. Like for the classical 2D Euler equation we can rewrite \eqref{sys:EwB} in vorticity formulation, exploiting the fact that $u,B$ are divergence free and that $B=(0,0,b(t,x))$ to obtain
	\begin{equation}\label{sys:EwB2}
	\left\{
	\begin{aligned}
	& \partial_t \omega+u\cdot \nabla_x \omega=-u \cdot \nabla_x b, \\
	&  u=\mathcal{K}\ast \omega,\\
	& \omega(0,x)=\omega^{in}(x),
	\end{aligned}
	\right.
	\end{equation}
	with $\omega=\mbox{curl}\,u$ the vorticity and $\mathcal{K}\coloneqq \frac{1}{2\pi }(\frac{-x_2}{\abs{x^2}},\frac{x_1}{\abs{x^2}})$ the classical kernel associated to the Biot--Savart law. Now we notice that the quantity $\widetilde{\omega}=\omega+b$ verifies the transport equation $\partial_t \widetilde{\omega}+u \cdot \nabla_x\widetilde{\omega}=\partial_t b$. From this observation, if we assume $b \in W^{1,\infty}(\R^+ \times \T^2)$, one can easily adapt Loeper's analysis in \cite[section 4]{L06} to obtain a stability estimate for \eqref{sys:EwB2} of the same type.
\end{remark}
\begin{remark}
	This last remark concerns three important generalizations of \cref{theo:main}.
	\begin{itemize}
		\item We emphasize that similar stability estimates can be obtained in the full space setting. We chose here to focus on the periodic case, like in \cite{I22}, because these stability estimates can be used to study the quasineutral limit of \eqref{sys:VPwB} in the spirit of \cite{HKI17,GPI20,GPI23,I22}.
		\item The proof of \cref{theo:main} is conducted using the $2$-Wasserstein metric, but can be generalized to any $p$-Wasserstein distance for $1<p<+\infty$ following the recent work by Iacobelli and Junn\'e \cite{IJ22} mentioned in the introduction. However, the control of the magnetized terms in the proof of \cref{theo:main} can be easily generalized in the $p$-Wasserstein framework, so for simplification we only consider the $2$-Wasserstein case.
		\item We can combine our approach to bound the extra magnetized terms with the analysis from \cite{GPI20,GPI23} to obtain stability estimates for the magnetized Vlasov--Poisson system for ions. This system is similar to the electron model \eqref{sys:VPwB}, with the main difference being that the electric field is induced by the ions and a background of thermalized electrons distributed according to a Maxwell--Boltzmann law, meaning that one has to work with the nonlinear Poisson equation $\Delta U= \exp(U)-\rho$ instead of the standard Poisson equation.
	\end{itemize}
\end{remark}

\subsection{Preliminaries and notations}\label{sec:VPwBprel}
We begin by writing the equations satisfied by the characteristics of \eqref{sys:VPwB}:

\begin{equation}\label{def:chara}
\left\{
\begin{aligned}
& \dot{X}(s;t,x,v)=V(s;t,x,v),\\
& \dot{V}(s;t,x,v)=E(s,X(s;t,x,v))+V(s;t,x,v) \wedge B(s,X(s;t,x,v)),
\end{aligned}
\right.
\end{equation}
with 
\begin{equation}\label{def:inichara}
\left(X(t;t,x,v),V(t;t,x,v)\right)=(x,v).
\end{equation}
In the following computations, we are only going to consider characteristics with initial conditions at $t=0$ so we will write $Y(s)$ or $Y(s,x,v)$ for $Y(s;0  ,x,v)$.

We begin this section by recalling the following bound on the electric field.
\begin{lemma}
Let $T>0$, and let $f$ be a weak solution of \eqref{sys:VPwB} such that $\rho \in L^1(\intervalleff{0}{T};L^p(\mathbb{T}^d))$ with $p>d$. Then $E \in L^1(\intervalleff{0}{T};L^\infty(\mathbb{T}^d))$ and there exists $C>0$ such that for all $t \in \intervalleff{0}{T}$
\begin{equation}\label{ineq:Erho}
\norme{E(t)}_{L^\infty( \T^d)} \leq C\left(1+\norme{\rho(t)}_{L^p(\mathbb{T}^d)}\right).
\end{equation}
\end{lemma}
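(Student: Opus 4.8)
The plan is to reduce this to a standard elliptic regularity estimate for the Poisson equation on the torus. Fix $t \in \intervalleff{0}{T}$ and recall that $U(t)$ solves $\Delta_x U(t) = 1 - \rho(t)$ on $\T^d$, hence $E(t) = -\nabla_x U(t)$ solves $\operatorname{div} E(t) = \rho(t) - 1$ with $\operatorname{curl} E(t) = 0$. The source $\rho(t) - 1$ has zero mean on $\T^d$ (since $\rho(t)$ is a probability density and $\abs{\T^d}=1$), so the equation is solvable and $E(t)$ is the unique curl-free, mean-zero solution; concretely $E(t) = \nabla_x \Delta_x^{-1}(\rho(t) - 1) = (\mathcal{G} \ast (\rho(t) - 1))$ where $\mathcal{G}$ is the gradient of the periodic Newtonian potential (Green's function) on $\T^d$.

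The key step is the pointwise bound. Split the convolution kernel $\mathcal{G}$ near and far from the origin: write $\mathcal{G} = \mathcal{G}\mathbbm{1}_{\abs{z}\le 1} + \mathcal{G}\mathbbm{1}_{\abs{z}> 1}$. On $\T^d$ the local part behaves like the whole-space kernel, $\abs{\mathcal{G}(z)} \lesssim \abs{z}^{1-d}$, which lies in $L^{p'}$ on the unit ball precisely when $p'(d-1) < d$, i.e. when $p' < d/(d-1)$, i.e. $p > d$ — this is exactly where the hypothesis $p > d$ enters. The far part is bounded (smooth away from the lattice), so it lies in every $L^{q}$. Then by Hölder's inequality with exponents $p$ and $p'$,
\begin{equation*}
\norme{E(t)}_{L^\infty(\T^d)} \le \norme{\mathcal{G}\mathbbm{1}_{\abs{z}\le 1}}_{L^{p'}}\norme{\rho(t)-1}_{L^p} + \norme{\mathcal{G}\mathbbm{1}_{\abs{z}> 1}}_{L^{p'}}\norme{\rho(t)-1}_{L^1} \le C\left(1 + \norme{\rho(t)}_{L^p(\T^d)}\right),
\end{equation*}
using $\norme{\rho(t)-1}_{L^1(\T^d)} \le \norme{\rho(t)}_{L^1} + 1 = 2$ and $\norme{\rho(t)-1}_{L^p} \le \norme{\rho(t)}_{L^p} + 1$. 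The constant $C$ depends only on $d$ and $p$. Finally, integrating in time and using $\rho \in L^1(\intervalleff{0}{T};L^p(\T^d))$ gives $E \in L^1(\intervalleff{0}{T};L^\infty(\T^d))$, and measurability in $t$ follows from the continuity properties of the convolution together with the measurability of $t \mapsto \rho(t)$.

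The only mild obstacle is the bookkeeping around the torus: one must confirm the solvability/uniqueness of the Poisson problem under the zero-mean condition and identify $E$ with the convolution against the periodized Green's function, rather than the $\R^d$ kernel. Once this representation is in place, the estimate is the classical weak-Young / Hölder argument, and the threshold $p > d$ is dictated purely by the local integrability of $\nabla \mathcal{G}$. An alternative route avoiding the explicit kernel is to use Calderón–Zygmund theory ($\norme{\nabla^2 U}_{L^p} \lesssim \norme{\rho - 1}_{L^p}$) followed by Morrey's embedding $W^{1,p}(\T^d) \hookrightarrow C^{0,1-d/p}(\T^d) \subset L^\infty$ for $p > d$ applied to $E = -\nabla U$; this also makes transparent why $p>d$ is the natural hypothesis.
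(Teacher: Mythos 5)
Your proof is correct and follows essentially the same route as the paper: represent $E(t)$ as the convolution of $\rho(t)-1$ with the gradient of the periodic Green's function, split that kernel into a locally $L^{p'}$ singular part (where $p>d$, i.e.\ $p'<d/(d-1)$, is exactly what is needed) plus a bounded far/smooth part, and conclude by H\"older against $\norme{\rho(t)-1}_{L^p}$ and $\norme{\rho(t)-1}_{L^1}\leq 2$; the paper simply makes the ``local part behaves like the whole-space kernel'' step precise via the decomposition $\mathcal{G}=G+\mathcal{G}_0$ with $\mathcal{G}_0$ smooth, quoted from Batt--Rein. Your closing Calder\'on--Zygmund/Morrey alternative is a valid variant but not what the paper does.
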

\begin{proof}
According to \cite{BR91}, the Green function for the Laplacian in the periodic case $\mathcal{G}$ can be written as
\begin{equation*}
\mathcal{G}(x)=G(x)+\mathcal{G}_0(x),
\end{equation*}
with $G(x)=-\frac{1}{2\pi}\log \abs{x}$ for $d=2$, $G(x)=\frac{1}{4\pi}\frac{1}{\abs{x}}$ for $d=3$ the Coulomb potential, and $\mathcal{G}_0 \in C^\infty(\mathbb{T}^d \cup \{(0,0,0)\})$.

Since the electric field verifies $E=-\nabla_x \mathcal{G} \ast (\rho-1)$, we have
\begin{align*}
\norme{E(t)}_{L^\infty(\T^d)}&\leq \left(\norme{\mathbf{1}_{\abs{x}\geq 1} \nabla_x G}_{L^\infty(\T^d)}+\norme{\nabla_x\mathcal{G}_0}_{L^\infty(\T^d)}\right) \norme{\rho(t)-1}_{L^1(\T^d)} \\
& +\norme{\mathbf{1}_{\abs{x}< 1}\nabla_x G}_{L^{p'}(\T^d)} \norme{\rho(t)-1}_{L^p(\T^d)},
\end{align*}
which immediately implies \eqref{ineq:Erho} because $\nabla_x G=C_d\frac{x}{\abs{x}^d} \in L^q(\T^d)$ for $q < \frac{d}{d-1}$, with $C$ just a numerical constant in \eqref{ineq:Erho} since $f$ is a probability measure.
\end{proof}
We also give the following estimate on the velocity characteristic $V$ when the electric field is in $L^\infty$.
\begin{lemma} Let $T>0$, and assume that $E \in L^1(\intervalleff{0}{T};L^\infty(\mathbb{T}^d))$, and $B \in L^\infty(\intervalleff{0}{T}\times \T^d)$. Then the velocity characteristic $V$ defined in \eqref{def:chara}, \eqref{def:inichara} verifies
\begin{equation}\label{ineq:V(t)}
\abs{V(t,x,v)} \leq \abs{v}\exp(t\norme{B}_{L^\infty(\intervalleff{0}{T}\times \T^d)})+\int_{0}^{t}\norme{E(s)}_{L^\infty(\mathbb{T}^d)}\exp\left((t-s)\norme{B}_{L^\infty(\intervalleff{0}{T}\times \T^d)}\right)ds,
\end{equation}
for all $t \in \intervalleff{0}{T}$.
\end{lemma}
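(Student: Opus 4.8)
The plan is to run an elementary Grönwall argument on the second characteristic equation in \eqref{def:chara}. First I would integrate $\dot V$ from $0$ to $t$: recalling the shorthand $V(s,x,v)=V(s;0,x,v)$ and the initial condition $V(0,x,v)=v$ from \eqref{def:inichara}, this gives
\[
V(t,x,v) = v + \int_0^t E(s,X(s,x,v))\,ds + \int_0^t V(s,x,v)\wedge B(s,X(s,x,v))\,ds .
\]
Taking Euclidean norms, using $\abs{a\wedge b}\le\abs{a}\,\abs{b}$, and bounding $\abs{E(s,X(s,x,v))}\le\norme{E(s)}_{L^\infty(\T^d)}$ together with $\abs{B(s,X(s,x,v))}\le\norme{B}_\infty$ (legitimate by \eqref{Bbounded}), one arrives at the scalar integral inequality
\[
\abs{V(t,x,v)} \le \abs{v} + \int_0^t \norme{E(s)}_{L^\infty(\T^d)}\,ds + \norme{B}_\infty\int_0^t\abs{V(s,x,v)}\,ds ,
\]
valid for every $t\in\intervalleff0T$; here $s\mapsto\abs{V(s,x,v)}$ is bounded on $\intervalleff0T$ because the trajectory $V(\cdot,x,v)$ is absolutely continuous there, using $E\in L^1(\intervalleff0T;L^\infty(\T^d))$ and $B\in L^\infty(\intervalleff0T\times\T^d)$.

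Then I would apply Grönwall's lemma. To land exactly on \eqref{ineq:V(t)}, rather than on the cruder bound $\bigl(\abs{v}+\int_0^t\norme{E(s)}_{L^\infty(\T^d)}\,ds\bigr)e^{t\norme{B}_\infty}$ coming from the monotone form of Grönwall, it is cleanest to pass to the differential version: the map $t\mapsto\abs{V(t,x,v)}$ is absolutely continuous on $\intervalleff0T$ and satisfies, for a.e.\ $t$,
\[
\frac{d}{dt}\abs{V(t,x,v)} \le \abs{\dot V(t,x,v)} \le \norme{E(t)}_{L^\infty(\T^d)} + \norme{B}_\infty\,\abs{V(t,x,v)} ,
\]
the first inequality coming from $\abs{V}=\lim_{\varepsilon\to0^+}\sqrt{\abs{V}^2+\varepsilon^2}$ and $\frac{d}{dt}\sqrt{\abs{V}^2+\varepsilon^2}=\frac{V\cdot\dot V}{\sqrt{\abs{V}^2+\varepsilon^2}}\le\abs{\dot V}$. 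Multiplying by the integrating factor $e^{-t\norme{B}_\infty}$, integrating from $0$ to $t$, and rearranging then yields \eqref{ineq:V(t)} exactly.

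I do not expect a real obstacle: the argument is essentially a one-line Grönwall estimate. The only points that need a little care are the non-smoothness of $v\mapsto\abs{v}$ at the origin — handled by the regularization $\abs{V}\simeq\sqrt{\abs{V}^2+\varepsilon^2}$ above, or equivalently by applying the time-dependent form of Grönwall's inequality directly to the integral inequality — and the fact that $E$ is only $L^1$ in time, which is why one must work with absolutely continuous rather than $C^1$ trajectories. I note in passing that the magnetic contribution could in fact be dropped altogether, since $V\cdot(V\wedge B)=0$ forces $\frac{d}{dt}\abs{V}^2=2\,V\cdot E$ and hence the sharper estimate $\abs{V(t,x,v)}\le\abs{v}+\int_0^t\norme{E(s)}_{L^\infty(\T^d)}\,ds$; but \eqref{ineq:V(t)} is the form that is convenient in the sequel and it suffices for our purposes.
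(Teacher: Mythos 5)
Your proof is correct and follows essentially the same route as the paper: both derive the scalar inequality $\abs{V(t,x,v)}\le\abs{v}+\int_0^t\norme{E(s)}_{L^\infty(\T^d)}ds+\norme{B}_\infty\int_0^t\abs{V(s,x,v)}ds$ and close it by Gr\"onwall, the only difference being that you use the differential, integrating-factor form (with the $\sqrt{\abs{V}^2+\varepsilon^2}$ regularization), whereas the paper applies the integral form of Gr\"onwall and then evaluates the resulting double integral by Fubini to land on \eqref{ineq:V(t)}. Your closing aside is also accurate: since $V\cdot(V\wedge B)=0$, the magnetic term does not change the speed and one even gets the sharper bound $\abs{V(t,x,v)}\le\abs{v}+\int_0^t\norme{E(s)}_{L^\infty(\T^d)}ds$, although the exponential form stated in the lemma is the one used later in the paper.
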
 
\begin{proof}
Thanks to \eqref{def:chara}, \eqref{def:inichara} we can write for all $t \in \intervalleff{0}{T}$,
\begin{align*}
\abs{V(t,x,v)} & \leq \abs{v}+\int_{0}^{t}\abs{E(s,X(s,x,v))}ds+\int_{0}^{t}\abs{V(s,x,v)}\abs{B(s,X(s,x,v))}ds,\\
& \leq \abs{v}+\int_{0}^{t}\norme{E(s)}_{L^\infty(\mathbb{T}^d)}ds+\norme{B}_{L^\infty(\intervalleff{0}{T}\times \T^d)}\int_{0}^{t}\abs{V(s,x,v)}ds.
\end{align*} 
This classical Gr\"onwall inequality yields the inequality
\begin{align*}
\abs{V(t,x,v)} &\leq \abs{v}+\int_{0}^{t}\norme{E(s)}_{L^\infty(\mathbb{T}^d)}ds\\
&+\int_{0}^{t}(\abs{v}+\int_{0}^{s}\norme{E(u)}_{L^\infty(\mathbb{T}^d)}du)\norme{B}_{L^\infty(\intervalleff{0}{T}\times \T^d)}\exp\left((t-s)\norme{B}_{L^\infty(\intervalleff{0}{T}\times \T^d)}\right)ds.
\end{align*}
Then we make the following basic observations 
\begin{align*}
\int_{0}^{t}\abs{v}\norme{B}_{L^\infty(\intervalleff{0}{T}\times \T^d)}\exp((t-s)\norme{B}_{L^\infty(\intervalleff{0}{T}\times \T^d)})ds=\abs{v}\exp\left(t\norme{B}_{L^\infty(\intervalleff{0}{T}\times \T^d)}\right)-\abs{v},
\end{align*}
and
\begin{align*}
&\int_{s=0}^{t}\int_{u=0}^{s}\norme{E(u)}_{L^\infty(\mathbb{T}^d)}\norme{B}_{L^\infty(\intervalleff{0}{T}\times \T^d)}\exp\left((t-s)\norme{B}_{L^\infty(\intervalleff{0}{T}\times \T^d)}\right)duds\\
&=\int_{u=0}^{t}\int_{s=u}^{t}\norme{E(u)}_{L^\infty(\mathbb{T}^d)}\norme{B}_{L^\infty(\intervalleff{0}{T}\times \T^d)}\exp\left((t-s)\norme{B}_{L^\infty(\intervalleff{0}{T}\times \T^d)}\right)duds\\
&=\int_{0}^{t}\norme{E(u)}_{L^\infty(\mathbb{T}^d)}\left(\exp\left((t-u)\norme{B}_{L^\infty(\intervalleff{0}{T}\times \T^d)}\right)-1\right)du.
\end{align*}
From these calculations we immediately deduce \eqref{ineq:V(t)}.
\end{proof}

\subsection{Proof of \cref{theo:main}}\label{sec:VPwBproof}

Throughout the proof $C$ will refer to a numerical constant that can change from one line to the next.
We consider two solutions $f_1,f_2$ associated to the two initial probability distributions $f_1(0),f_2(0)$. As usual, we write the corresponding densities, electric fields, and characteristics $\rho_1,\rho_2$, $E_1,E_2$, and $Y_1(t,x,v), Y_2(t,y,w)=(X_1(t,x,v),V_1(t,x,v)),(X_2(t,y,w),V_2(t,y,w))$. 


We consider $\pi_0$ an optimal $W_2$-coupling between $f_1(0)$ and $f_2(0)$ and we define the quantity $Q(t)$ defined as the unique constant (assuming it exists) such that 
\begin{equation*}
Q(t)=\frac{1}{2} \int_{(\mathbb{T}^d \times \R^d)^2} \left[\lambda(t)\abs{X_1(t,x,v)-X_2(t,y,w)}^2+\abs{V_1(t,x,v)-V_2(t,y,w)}^2\right]d\pi_0(x,v,y,w).
\end{equation*}
For $\lambda(t)=1$, this is the classical quantity introduced by Loeper in \cite{L06} that controls the $2$-Wasserstein distance between $f_1(t)$ and $f_2(t)$.
The extra weight $\lambda(t)$ was introduced in \cite{I22} and is central in the proof of the improved stability estimate for the Vlasov--Poisson system. This improvement is achieved by making the weight depend on $Q(t)$ itself through the identity $\lambda(t)=\abs{\log Q(t)}$. Here we will consider both the case $\lambda(t)=1$ to prove statement 1 in \cref{theo:main} and $\lambda(t)=\abs{\log Q(t)}$ to prove statement 2 in \cref{theo:main}.

In the following computations involving $Q$, we will often omit to write the variables $t,x,v,y,w$ to lighten the presentation, so if not stated otherwise $X_1$ will always be a function of $t,x,v$ and $X_2$ a function of $t,y,w$.

Now we differentiate $Q$ and write $Q'$ by splitting the magnetic part of the Lorentz force $V \wedge B$ in the following way
\begin{align*}
Q'(t) & =\frac{1}{2} \int_{(\mathbb{T}^d \times \R^d)^2} \lambda'(t)\abs{X_1-X_2}^2 d\pi_0\\ 
& + \lambda(t)\int_{(\mathbb{T}^d \times \R^d)^2} (X_1-X_2) \cdot  (V_1-V_2)d\pi_0\\
& + \int_{(\mathbb{T}^d \times \R^d)^2} (V_1-V_2) \cdot (E_1(t,X_1)-E_2(t,X_2))d\pi_0\\
& + \int_{(\mathbb{T}^d \times \R^d)^2} (V_1-V_2) \cdot  \left[V_2 \wedge (B_1(t,X_1)-B_2(t,X_2)\right]d\pi_0\\
& + \int_{(\mathbb{T}^d \times \R^d)^2} (V_1-V_2) \cdot \left[(V_1-V_2) \wedge B(t,X_1)\right] d\pi_0.
\end{align*}
First, we notice that the last term is null. We will use the analysis from \cite{L06,I22} to bound the first three terms that don't depend on the magnetic field. Hence, we will only focus on controlling the penultimate term which we denote as $P(t)$. We begin by using the estimate on the velocity characteristic \eqref{ineq:V(t)} to obtain
\begin{align*}
P(t) &\leq \int_{(\mathbb{T}^d \times \R^d)^2}  \abs{V_2}\abs{V_1-V_2} \abs{B_1(t,X_1)-B_2(t,X_2)}d\pi_0,\\
&\leq \int_{(\mathbb{T}^d \times \R^d)^2}   \abs{w}e^{t\norme{ B}_\infty}\abs{V_1-V_2}\abs{B_1(t,X_1)-B_2(t,X_2)}d\pi_0\\
&+ \int_{(\mathbb{T}^d \times \R^d)^2} \left(\int_{0}^{t}\norme{E_2(s)}_{L^\infty(\mathbb{T}^d)}e^{(t-s)\norme{B}_{\infty}}
ds\right)\abs{V_1-V_2}\abs{B_1(t,X_1)-B_2(t,X_2)}d\pi_0\\
&= R(t)+S(t).
\end{align*}


We bound $S(t)$ using \eqref{reguB} for $\alpha=1-\frac{1}{p}$ with $p>1$, and the estimate on the electric field \eqref{ineq:Erho} which we can use thanks to the regularity on $A$ \eqref{condi:A}, and the Cauchy--Schwarz inequality 
\begin{align*}
S(t)& \leq \norme{B}_{L^\infty\left( W^{1,\infty}\right)}\left(\int_{0}^{t}\norme{E_2(s)}_{L^\infty(\mathbb{T}^d)}e^{(t-s)\norme{B}_{\infty}}ds\right)\int_{(\mathbb{T}^d \times \R^d)^2} \abs{V_1-V_2}  \abs{X_1-X_2}^{1-\frac{1}{p}}d\pi_0,\\
&\leq  C \norme{B}_{L^\infty\left( W^{1,\infty}\right)}\left(\int_{0}^{t}\left(1+\norme{\rho_2(s)}_{L^\infty(\mathbb{T}^d)}\right)e^{(t-s)\norme{B}_{\infty}}ds\right)\left(\int_{(\mathbb{T}^d \times \R^d)^2}\abs{V_1-V_2}^2 d\pi_0\right)^\frac{1}{2}\\
&\times \left(\int_{(\mathbb{T}^d \times \R^d)^2}\abs{X_1-X_2}^{2(1-\frac{1}{p})} d\pi_0\right)^\frac{1}{2}.
\end{align*}
Now we use the Jensen inequality and the definition of $Q(t)$ to write
\begin{align*}
S(t)& \leq C \norme{B}_{L^\infty\left( W^{1,\infty}\right)}\left(\int_{0}^{t}\left(1+\norme{\rho_2(s)}_{L^\infty(\mathbb{T}^d)}\right)e^{(t-s)\norme{B}_{\infty}}ds\right)\left(\int_{(\mathbb{T}^d \times \R^d)^2}\abs{V_1-V_2}^2 d\pi_0\right)^\frac{1}{2}\\
&\times \left(\int_{(\mathbb{T}^d \times \R^d)^2}\abs{X_1-X_2}^{2} d\pi_0\right)^{\frac{1}{2}(1-\frac{1}{p})}\\
& \leq C \norme{B}_{L^\infty\left( W^{1,\infty}\right)}\left(\int_{0}^{t}\left(1+\norme{\rho_2(s)}_{L^\infty(\mathbb{T}^d)}\right)e^{(t-s)\norme{B}_{\infty}}ds\right) Q(t)^\frac{1}{2} \left(\frac{Q(t)}{\lambda(t)}\right)^{\frac{1}{2}(1-\frac{1}{p})}.
\end{align*} 

Now we turn to the control of $R(t)$. Like for the estimate on $S(t)$, we first use \eqref{Bholder} for $\alpha=1-\frac{1}{p}$ with $p>1$ to obtain


\begin{equation*}
R(t)\leq \norme{B}_{L^\infty\left( W^{1,\infty}\right)}e^{t\norme{ B}_\infty} \int_{(\mathbb{T}^d \times \R^d)^2} \abs{w}\abs{V_1-V_2}\abs{X_1-X_2}^{1-\frac{1}{p}}d\pi_0.
\end{equation*}
Now we apply the H\"older inequality for three functions with exponents $2p,2,\frac{2}{1-\frac{1}{p}}$ and use the definition of $Q(t)$ to obtain
\begin{align*}
R(t)&\leq \norme{B}_{L^\infty\left( W^{1,\infty}\right)}e^{t\norme{ B}_\infty}\left(\int_{(\mathbb{T}^d \times \R^d)^2}\abs{w}^{2p} d\pi_0\right)^\frac{1}{2p}\left(\int_{(\mathbb{T}^d \times \R^d)^2}\abs{V_1-V_2}^2 d\pi_0\right)^\frac{1}{2}\\
& \times \left(\int_{(\mathbb{T}^d \times \R^d)^2}\abs{X_1-X_2}^2 d\pi_0\right)^{\frac{1}{2}(1-\frac{1}{p})},\\
& \leq \norme{B}_{L^\infty\left( W^{1,\infty}\right)}e^{t\norme{ B}_\infty}\left(\int_{(\mathbb{T}^d \times \R^d)^2}\abs{w}^{2p} d\pi_0\right)^\frac{1}{2p} Q(t)^\frac{1}{2} \left(\frac{Q(t)}{\lambda(t)}\right)^{\frac{1}{2}(1-\frac{1}{p})},\\
& \leq \norme{B}_{L^\infty\left( W^{1,\infty}\right)}e^{t\norme{ B}_\infty}\left(\int_{\mathbb{T}^d \times \R^d}\abs{w}^{2p} df_2^0(y,w)\right)^\frac{1}{2p} Q(t)^{1-\frac{1}{2p}}\left(\frac{1}{\lambda(t)}\right)^{\frac{1}{2}(1-\frac{1}{p})}.
\end{align*}
The last inequality is obtained because we have the identity $\int_{(\mathbb{T}^d \times \R^d)^2}\abs{w}^r d\pi_0=\int_{\mathbb{T}^d \times \R^d}\abs{w}^r df_2^0(y,w)$.

Finally, using the assumption \eqref{condi:Orli}, we have
\begin{equation*}\label{ineq:R}
R(t) \leq C\norme{B}_{L^\infty\left( W^{1,\infty}\right)}e^{t\norme{ B}_\infty} p Q(t)^{1-\frac{1}{2p}}\left(\frac{1}{\lambda(t)}\right)^{\frac{1}{2}(1-\frac{1}{p})}.
\end{equation*}

We know differentiate between the cases $\lambda(t)=1$ and $\lambda(t)=\abs{\log Q(t)}$ to prove the two statements in \cref{theo:main}.

\subsubsection*{1st case with $\mathbf{\lambda(t)=1}$:}

From the above analysis, we deduce that the magnetized contribution to the derivative of $Q$ verifies for all $p >1$
\begin{equation*}
R(t)+S(t) \leq C\norme{B}_{L^\infty\left( W^{1,\infty}\right)}\left[p e^{t\norme{ B}_\infty} +\int_{0}^{t}\left(1+\norme{\rho_2(s)}_{L^\infty(\mathbb{T}^d)}\right)e^{(t-s)\norme{B}_{\infty}}ds\right]Q(t)^{1-\frac{1}{2p}}.
\end{equation*}
We will be working in the regime $Q(t) < 1/e$ which implies that $\abs{\log Q(t)}> 1$. This means we can impose the substitution $p= \abs{\log  Q(t)}$, which gives us $(Q(t))^{-\frac{1}{2p}}=\exp\left(-\frac{1}{2\abs{\log  Q(t)} } \log Q(t)\right)=e^2$ which finally implies that
\begin{equation}\label{ineq:finalBloeper}
R(t)+S(t) \leq C\norme{B}_{L^\infty\left( W^{1,\infty}\right)}\left[\abs{\log  Q(t)} e^{t\norme{ B}_\infty} +\int_{0}^{t}\left(1+\norme{\rho_2(s)}_{L^\infty(\mathbb{T}^d)}\right)e^{(t-s)\norme{B}_{\infty}}ds\right]Q(t).
\end{equation}
This is a satisfactory bound that will allow us to close the Gr\"onwall inequality. Furthermore, we see that the term $C \norme{B}_{L^\infty\left( W^{1,\infty}\right)} e^{t\norme{ B}_\infty}\abs{\log  Q(t)}Q(t)$ is of the same order as the term due to the electric field. Indeed, using the arguments from \cite{L06,HKI17} we have that
\begin{align*}
&\int_{(\mathbb{T}^d \times \R^d)^2} (X_1-X_2) \cdot  (V_1-V_2)d\pi_0
 + \int_{(\mathbb{T}^d \times \R^d)^2} (V_1-V_2) \cdot (E_1(t,X_1)-E_2(t,X_2))d\pi_0,\\
& \leq CA(t)\left(Q(t)+\sqrt{Q(t)}\sqrt{\varphi(Q(t))}\right),
\end{align*}
with 
\begin{equation*}
\varphi(t)\coloneqq
\left\{
\begin{aligned}
& t\log^2 t &\mbox{ for }& s \in \intervalleof{0}{1/e},\\
& t &\mbox{ for }& s >1/e.
\end{aligned}
\right.
\end{equation*}
Since we're in the regime $Q(t) < 1/e$, this implies that the electric term is bounded by a term of order $A(t)Q(t)\abs{\log  Q(t)}$.

Hence, we deduce the following final Gr\"onwall inequality for $Q$
\begin{equation*}
Q'(t) \leq c_d\left[A(t)+\norme{B}_{L^\infty\left( W^{1,\infty}\right)}\left( e^{t\norme{ B}_\infty} +\int_{0}^{t}\left(1+\norme{\rho_2(s)}_{L^\infty(\mathbb{T}^d)}\right)e^{(t-s)\norme{B}_{\infty}}ds\right)\right]Q(t)\abs{\log  Q(t)},
\end{equation*}
where $c_d$ is a dimensional constant.

This implies that in the regime $Q(t) < 1/e$ we have
\begin{equation*}
\begin{aligned}
&Q(t) \leq\\
&\exp\left[\log\left(Q(0)\right)\exp\left(-c_d \int_{0}^{t} \left[A(s)+\norme{B}_{L^\infty\left( W^{1,\infty}\right)}\left( e^{s\norme{ B}_\infty} +\int_{0}^{s}\left(1+\norme{\rho_2(u)}_{L^\infty(\mathbb{T}^d)}\right)e^{(s-u)\norme{B}_{\infty}}du\right)\right]ds\right)\right].
\end{aligned}
\end{equation*}
Since $Q(0)=W_2^2(f_1(0),f_2(0))$, the regime $Q(t) < 1/e$ is assured thanks to \eqref{condi:W2ini} and the condition $W_2^2(f_1(0),f_2(0)) < 1/e^2$. Finally we recall the classical inequality $W_2^2(f_1(t),f_2(t)) \leq Q(t)$ for all $t \in \intervalleff{0}{T}$. This concludes the proof of statement 1 of \cref{theo:main}.

\subsubsection*{2nd case with $\lambda(t)=\abs{\log Q(t)}$:}

Thanks to the above analysis we obtain for all $p>1$
\begin{equation*}
R(t)+S(t) \leq C\norme{B}_{L^\infty\left( W^{1,\infty}\right)}\left[p e^{t\norme{ B}_\infty} +\int_{0}^{t}\left(1+\norme{\rho_2(s)}_{L^\infty(\mathbb{T}^d)}\right)e^{(t-s)\norme{B}_{\infty}}ds\right]Q(t)^\frac{1}{2} \left(\frac{Q(t)}{\abs{\log Q(t)}}\right)^{\frac{1}{2}(1-\frac{1}{p})}.
\end{equation*}

We will also be working in the regime $Q(t) < 1/e$, but here we impose the substitution $p=\abs{\log \frac{Q(t)}{\abs{\log Q(t)}}}$. We are allowed to do this because since we're in the regime $Q(t) < 1/e$, then $\abs{\log Q(t)} < 1$ which implies $\frac{Q(t)}{\abs{\log Q(t)}} < Q(t) < 1/e$ and so $\abs{\log \frac{Q(t)}{\abs{\log Q(t)}}} >1$.
With this substitution, we have $\left(\frac{Q(t)}{\abs{\log Q(t)}}\right)^{-\frac{1}{2p}}=\exp\left(-\frac{1}{2\abs{\log  \frac{Q(t)}{\abs{\log Q(t)}}} } \log \frac{Q(t)}{\abs{\log Q(t)}}\right)=e^2$ which implies that
\begin{equation*}
R(t)+S(t) \leq C\norme{B}_{L^\infty\left( W^{1,\infty}\right)}\left[\abs{\log  \frac{Q(t)}{\abs{\log Q(t)}}} e^{t\norme{ B}_\infty} +\int_{0}^{t}\left(1+\norme{\rho_2(s)}_{L^\infty(\mathbb{T}^d)}\right)e^{(t-s)\norme{B}_{\infty}}ds\right]\frac{Q(t)}{\sqrt{\abs{\log Q(t)}}}.
\end{equation*}
We now observe that as long as $Q(t) \leq 1$ then
\begin{equation*}
\abs{\log  \frac{Q(t)}{\abs{\log Q(t)}}} \leq C\abs{\log Q(t)}.
\end{equation*}
This gives us the final estimate on the magnetized terms
\begin{equation}\label{ineq:finalBiaco}
R(t)+S(t) \leq C\norme{B}_{L^\infty\left( W^{1,\infty}\right)}\left[\abs{\log Q(t)} e^{t\norme{ B}_\infty} +\int_{0}^{t}\left(1+\norme{\rho_2(s)}_{L^\infty(\mathbb{T}^d)}\right)e^{(t-s)\norme{B}_{\infty}}ds\right]\frac{Q(t)}{\sqrt{\abs{\log Q(t)}}}.
\end{equation}
This bound will allow us to close the Gr\"onwall inequality. We also see the improvement compared to \eqref{ineq:finalBloeper}, because in \eqref{ineq:finalBiaco} the leading order term in $Q$ is $\sqrt{\abs{\log Q(t)}} Q(t)$, whereas in \eqref{ineq:finalBloeper} it is $\abs{\log Q(t)} Q(t)$. Furthermore, this term is of the same order in $Q$ as in the estimate on the electric terms obtained in \cite{I22}.

Indeed, the analysis in the proof of theorem 3.1 in \cite{I22} allows us to write
\begin{align*}
&\int_{(\mathbb{T}^d \times \R^d)^2} (X_1-X_2) \cdot  (V_1-V_2)d\pi_0
+ \int_{(\mathbb{T}^d \times \R^d)^2} (V_1-V_2) \cdot (E_1(t,X_1)-E_2(t,X_2))d\pi_0,\\
& \leq CA(t)\sqrt{\abs{\log Q(t)}} Q(t),
\end{align*}
as long as $Q(t) \leq 1/e$.

Hence, we deduce the following final Gr\"onwall inequality for $Q$
\begin{equation*}
Q'(t) \leq C_d\left[A(t)+\norme{B}_{L^\infty\left( W^{1,\infty}\right)}\left( e^{t\norme{ B}_\infty} +\int_{0}^{t}\left(1+\norme{\rho_2(s)}_{L^\infty(\mathbb{T}^d)}\right)e^{(t-s)\norme{B}_{\infty}}ds\right)\right]Q(t)\sqrt{\abs{\log Q(t)}},
\end{equation*}
where $C_d$ is a dimensional constant.

This implies that in the regime $Q(t) < 1/e$ we have
\begin{equation}\label{ineq:Qbis}
Q(t) \leq
e^{-\left[\sqrt{\log\left(Q(0)\right)}-C_d \int_{0}^{t} \left[A(s)+\norme{B}_{L^\infty\left( W^{1,\infty}\right)}\left( e^{s\norme{ B}_\infty} +\int_{0}^{s}\left(1+\norme{\rho_2(u)}_{L^\infty(\mathbb{T}^d)}\right)e^{(s-u)\norme{B}_{\infty}}du\right)\right]ds\right]^2.}
\end{equation}
From \eqref{ineq:Qbis} we see that the regime $Q(t) < 1/e$ is guaranteed if 
\begin{equation*}
\sqrt{\log\left(Q(0)\right)}\geq C_d \int_{0}^{t} \left[A(s)+\norme{B}_{L^\infty\left( W^{1,\infty}\right)}\left( e^{s\norme{ B}_\infty} +\int_{0}^{s}\left(1+\norme{\rho_2(u)}_{L^\infty(\mathbb{T}^d)}\right)e^{(s-u)\norme{B}_{\infty}}du\right)\right]ds+1.
\end{equation*}
Using the comparison between $Q(t)$ and $W_2(f_1(t),f_2(t))$ carried out in the proof of theorem 3.1 in \cite{I22}, and lemma 3.7 in \cite{I22} where the quantity $Q$ is shown to be well-defined (the proof is the same for \eqref{sys:VPwB}), we are able to conclude the proof of statement 2 of \cref{theo:main}.

\subsection{Stability estimates when $B$ is log-lipschitz in position}\label{sec:Blogl}
In this section, we slightly weaken \eqref{reguB} and assume that the magnetic field is no longer Lipschitz but just log-lipschitz in position \eqref{ineq:Blogl}. More precisely, we assume that $B$ verifies \eqref{ineq:Blogl} with $\beta=1$:
\begin{equation}\label{reguBlogl}
\abs{B(t,x)-B(t,y)} \leq C \abs{x-y}\abs{\log\abs{x-y}}.
\end{equation}
In particular, \eqref{reguBlogl} implies that $B \in  L^\infty_{loc}\left(\R^+, L^\infty(\mathbb{T}^d)\right)$.
For such magnetic fields, we can adapt the proof of \cref{theo:main} in \cref{sec:VPwBproof} to obtain stability estimates for \eqref{sys:VPwB}. More precisely, we have to adapt the proof of the second statement of \cref{theo:main}, because for such a singular magnetic field we need to use the kinetic Wasserstein approach developed in \cite{I22}. This allows us to show the following result:
\begin{theorem}\label{theo:Blogl}
	Let $f_1,f_2,\rho_1, \rho_2, A$ be defined like in \cref{theo:main}, and we also define 
	\begin{equation*}
	K(s)=A(s)+\left( e^{s\norme{ B}_\infty} +\int_{0}^{s}\left(1+\norme{\rho_2(u)}_{L^\infty(\mathbb{T}^d)}\right)e^{(s-u)\norme{B}_{\infty}}du\right).
	\end{equation*}
For $T>0$, assume that $B$ verifies \eqref{reguBlogl} and that $A$ satisfies \eqref{condi:A}.\\
Assume also that there exists a universal constant $C_0$ such that for all $k\geq 1$
\begin{equation}\label{condi:Orlibis}
\int_{\mathbb{T}^d \times \R^d}\abs{v}^k df_2^0(x,v) \leq \sqrt{(C_0 k)^k}.
\end{equation}	
Then there exists a dimensional constant $C_d$ and a universal constant $c_0$ such that if $W_2^2(f_1(0),f_2(0))$ is sufficiently small so that $W_2^2(f_1(0),f_2(0)) < c_0$ and
	\begin{equation}\label{condi:W2initer}
	\abs{\log\left(W_2^2(f_1(0),f_2(0))\abs{\log\left(\frac{1}{2}W_2^2(f_1(0),f_2(0))\right)}\right)} \geq \exp\left(C_d \int_{0}^{t} K(s) ds\right),
	\end{equation}
	then
	\begin{equation}
	W_2^2(f_1(t),f_2(t)) \leq
	\exp\left[\log\left(W_2^2(f_1(0),f_2(0))\abs{\log\left(\frac{1}{2}W_2^2(f_1(0),f_2(0))\right)}\right)\exp\left(-C_d \int_{0}^{t} K(s) ds\right)\right].
	\end{equation}
\end{theorem}
This result is obtained by adapting the proof of \cref{theo:main}, we explain how below.
\begin{proof}
The main difference with the previous section is that, since $B$ is log-lipschitz, its H\"older norm is going to depend on $\alpha$ for $\alpha \in \intervalleoo{0}{1}$. Indeed if $B$ verifies \eqref{reguBlogl} then we have for $\alpha \in \intervalleoo{0}{1}$
\begin{equation}
\norme{B}_{C^{0,\alpha}(\T^d)} \leq C \underset{x,y \in \T^d}{\max}\left(\abs{x-y}^{1-\alpha}\abs{\log\abs{x-y}}\right).
\end{equation}
By straightforward computations, we show that the function $h \colon x \mapsto x^{1-\alpha} \abs{\log \abs{x}}$ on $\T^d$ reaches its maximum at $x_0=e^{-\frac{1}{1-\alpha}}$ and that $h(x_0)=\frac{e^{-1}}{1-\alpha}$.

Taking this into account in the proof presented in \cref{sec:VPwBproof}, we can write the following estimates on the magnetized terms $R(t)$ and $S(t)$ defined in the previous section
\begin{align*}
R(t)\leq C p e^{t\norme{ B}_\infty}\left(\int_{(\mathbb{T}^d \times \R^d)^2}\abs{w}^{2p} d\pi_0\right)^\frac{1}{2p} Q(t)^\frac{1}{2} \left(\frac{Q(t)}{\lambda(t)}\right)^{\frac{1}{2}(1-\frac{1}{p})},
\end{align*}
and 
\begin{align*}
S(t)\leq C p\left(\int_{0}^{t}\left(1+\norme{\rho_2(s)}_{L^\infty(\mathbb{T}^d)}\right)e^{(t-s)\norme{B}_{\infty}}ds\right) Q(t)^\frac{1}{2} \left(\frac{Q(t)}{\lambda(t)}\right)^{\frac{1}{2}(1-\frac{1}{p})},
\end{align*}
where $p=\frac{1}{1-\alpha}$ like in \cref{sec:VPwBproof}.

Using the new decay on the velocity moments \eqref{condi:Orlibis}, we obtain the following bounds on the magnetized terms
\begin{equation*}
	R(t)+S(t) \leq C \left(p^{\frac{3}{2}}e^{t\norme{ B}_\infty}+p\left(\int_{0}^{t}\left(1+\norme{\rho_2(s)}_{L^\infty(\mathbb{T}^d)}\right)e^{(t-s)\norme{B}_{\infty}}ds\right)\right)Q(t)^\frac{1}{2} \left(\frac{Q(t)}{\lambda(t)}\right)^{\frac{1}{2}(1-\frac{1}{p})}.
\end{equation*}
Now we proceed exactly like in the second case in \cref{sec:VPwBproof} where $\lambda(t)=\abs{\log Q(t)}$ and we impose the substitution $p=\abs{\log \frac{Q(t)}{\abs{\log Q(t)}}}$. Using the same analysis, we can deduce that in the regime $Q(t) \leq 1/e$ the following estimate holds
\begin{equation*}
Q'(t) \leq C_d\left[A(t)+\left( e^{t\norme{ B}_\infty} +\int_{0}^{t}\left(1+\norme{\rho_2(s)}_{L^\infty(\mathbb{T}^d)}\right)e^{(t-s)\norme{B}_{\infty}}ds\right)\right]Q(t)\abs{\log Q(t)},
\end{equation*}
where $C_d$ is a dimensional constant.

This implies that in the regime $Q(t) \leq 1/e$ we have
\begin{equation*}
\begin{aligned}
&Q(t) \leq\\
&\exp\left[\log\left(Q(0)\right)\exp\left(-C_d \int_{0}^{t} \left[A(s)+\left( e^{s\norme{ B}_\infty} +\int_{0}^{s}\left(1+\norme{\rho_2(u)}_{L^\infty(\mathbb{T}^d)}\right)e^{(s-u)\norme{B}_{\infty}}du\right)\right]ds\right)\right].
\end{aligned}
\end{equation*}
Like for the proof of the second statement of \cref{theo:main}, we use the comparison between $Q(t)$ and $W_2(f_1(t),f_2(t))$ carried out in the proof of theorem 3.1 in \cite{I22}, and lemma 3.7 in \cite{I22} where the quantity $Q$ is shown to be well-defined (the proof is the same for \eqref{sys:VPwB}). This allows us to conclude the proof of \cref{theo:Blogl}.

\end{proof}

\section{An improved Dobrushin estimate for the magnetized Vlasov equation} \label{sec:V}

We consider the magnetized Vlasov equation with smooth kernel given by
\begin{equation}\label{eq:Vlasov}\tag{VB}
\partial_t f + v\cdot \nabla_x f + \left(F[f]+v \wedge B\right) \cdot \nabla_{v} f= 0,
\end{equation}
with $F[f]\coloneqq \nabla (K \ast \rho_f)$, $\rho_f \coloneqq \int f dv$, and the initial condition $f^{in} \coloneqq f(0)$ which is a probability density. Here $K$ is a $C^{1,1}$ potential such that $\norme{D^2K}_\infty \eqqcolon H < +\infty$ and the external magnetic field $B$ is constant and given by $B \coloneqq (0,0,\omega)$, with $\omega \geq 0$. Indeed this choice of $B$ covers all possible constant magnetic fields because the Vlasov equation is invariant under rotations.

\subsection{Main result and preliminaries}\label{sec:VwBmain}

The results in this section will be valid for both dimension $d=2$ and $d=3$, but as explained in the introduction the results will be quite different in both dimensions.
We are going to prove the following theorem.
\begin{theorem}\label{theo:2}
	Let $d=2,3$ and let $f_1,f_2$ be two solutions of \eqref{eq:Vlasov}.
	
	If $d=2$, then 
	\begin{equation}
	\begin{aligned}
	& W_1(f_1(t),f_2(t)) \leq \\
	&\min\left(\left(\sqrt{\frac{2(1-\cos(\omega t))}{\omega^2}}+1\right)e^{4H\left(\frac{2(t-\frac{\sin(\omega t)}{\omega})}{\omega^2}+t\right)},e^{(1+2H)t}\right)W_1(f_1(0),f_2(0)).
	\end{aligned}
	\end{equation}
	
	If $d=3$, then 
	\begin{equation}
	\begin{aligned}
	&W_1(f_1(t),f_2(t)) \leq\\
	&\min\left(\left(\sqrt{\frac{2(1-\cos(\omega t))}{\omega^2}+t^2}+1\right)e^{4H\left(\frac{2(t-\frac{\sin(\omega t)}{\omega})}{\omega^2}+\frac{t^3}{3}+t\right)},e^{(1+2H)t}\right)W_1(f_1(0),f_2(0)).
	\end{aligned}
	\end{equation}
\end{theorem}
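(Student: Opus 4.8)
The plan is to run Dobrushin's optimal-coupling Grönwall argument twice---once in the bare characteristic variables, giving the factor $e^{(1+2H)t}$, and once in variables renormalized by the linear magnetized flow, giving the refined factor---and then take the minimum. Fix $T>0$, let $\pi_0$ be an optimal $W_1$-coupling of $f_1(0)$ and $f_2(0)$, and push it forward along the two characteristic flows of \eqref{eq:Vlasov} to obtain a coupling $\pi_t$ of $f_1(t)$ and $f_2(t)$; since $K\in C^{1,1}(\T^d)$ the forces $F[f_i]$ are bounded and $H$-Lipschitz in $x$ uniformly in time, so the flows are globally defined and the differentiations under the integral sign below are legitimate. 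The one measure-theoretic input, exactly as in the classical estimate, is that for every $x$ one has $\abs{F[f_1](t,x)-F[f_2](t,x)}\le H\int\abs{X_1(t)-X_2(t)}\,d\pi_0=:H\,P(t)$ (write $F[f_i](t,x)=\int\nabla K(x-X_i(t,\cdot))\,df_i^0$ and use the marginals of $\pi_0$), and hence $\abs{F[f_1](t,X_1)-F[f_2](t,X_2)}\le H\abs{X_1-X_2}+H\,P(t)$ along the flows.

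For the bound $e^{(1+2H)t}$ I would work in the bare variables $(X,V)$ and set $\widetilde D(t):=\int(\abs{X_1-X_2}+\abs{V_1-V_2})\,d\pi_0$. Differentiating, $\tfrac{d}{dt}\abs{X_1-X_2}\le\abs{V_1-V_2}$, and differentiating $\abs{V_1-V_2}^2$ the magnetic contribution drops since $(V_1-V_2)\cdot[(V_1-V_2)\wedge B]=0$, leaving $\tfrac{d}{dt}\abs{V_1-V_2}\le\abs{F[f_1](t,X_1)-F[f_2](t,X_2)}$. Integrating against $\pi_0$ and using the input above gives $\widetilde D'\le(1+2H)\widetilde D$, hence $\widetilde D(t)\le e^{(1+2H)t}\widetilde D(0)=e^{(1+2H)t}W_1(f_1(0),f_2(0))$, which dominates $W_1(f_1(t),f_2(t))$ because $\pi_t$ couples $f_1(t)$ and $f_2(t)$.

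For the refined bound I would renormalize by the linear magnetized flow $\mathcal L_t(x,v)=(x+c(t)v,\,e^{tA}v)$, where $Av:=v\wedge B$ and $c(t):=\int_0^t e^{sA}\,ds$, passing to $W:=e^{-tA}V$ and $\mathcal X:=X-c(t)W$. Since $e^{sA}$ commutes with $A$, a short computation shows that $(\mathcal X,W)$ solves a system driven only by the force, $\dot W=e^{-tA}F[f](t,\mathcal X+c(t)W)$ and $\dot{\mathcal X}=-c(t)e^{-tA}F[f](t,\mathcal X+c(t)W)$, so the skew term has been absorbed into the time-dependent coefficient $c(t)$. Diagonalising $A$, the operator norm $\gamma(t):=\norme{c(t)}$ equals $\sqrt{2(1-\cos\omega t)}/\omega$ when $d=2$, whereas for $d=3$ the component parallel to $B$ contributes an undamped factor $t$ and one only has $\gamma(t)\le\sqrt{2(1-\cos\omega t)/\omega^2+t^2}$; this is the source of the $d=2$ versus $d=3$ dichotomy. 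Setting $G(t):=\int\abs{\mathcal X_1-\mathcal X_2}\,d\pi_0$, $N(t):=\int\abs{W_1-W_2}\,d\pi_0$, $\mathcal D:=G+N$, and using that $e^{\pm tA}$ is an isometry, one gets $N'\le 2H\,P$ and $G'\le 2H\gamma(t)\,P$, while $\abs{X_1-X_2}\le\abs{\mathcal X_1-\mathcal X_2}+\gamma(t)\abs{W_1-W_2}$ gives $P\le G+\gamma(t)N$; combining these and absorbing $(1+\gamma)\max(1,\gamma)\le 2(1+\gamma^2)$ yields $\mathcal D'\le 4H(1+\gamma(t)^2)\,\mathcal D$. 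Grönwall then gives $\mathcal D(t)\le e^{4H\int_0^t(1+\gamma(s)^2)\,ds}\mathcal D(0)$ with $\mathcal D(0)=W_1(f_1(0),f_2(0))$, and since $\abs{X_1-X_2}+\abs{V_1-V_2}\le(1+\gamma(t))\big(\abs{\mathcal X_1-\mathcal X_2}+\abs{W_1-W_2}\big)$ we conclude $W_1(f_1(t),f_2(t))\le(1+\gamma(t))\,\mathcal D(t)$. Evaluating $\int_0^t\gamma(s)^2\,ds=\tfrac{2}{\omega^2}\big(t-\tfrac{\sin\omega t}{\omega}\big)$ for $d=2$ (with the extra $\tfrac{t^3}{3}$ for $d=3$) turns this into the first term of the claimed minimum.

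The main obstacle is the refined estimate: one must choose the renormalization so that the magnetic term disappears \emph{entirely} from the characteristic system rather than merely becoming skew, and then carry the anisotropic drift $c(t)$ through the Grönwall argument with the correct powers---in particular recognising that the position discrepancy is controlled by $G+\gamma N$, not $G+N$, which is exactly what produces the factor $(1+\gamma^2)$ in the exponent and the clean prefactor $(1+\gamma)$---all while reading off from the spectrum of $B\wedge\,\cdot$ the contrasting behaviour of $\gamma(t)$ in dimensions $2$ and $3$ (bounded rotation in the plane orthogonal to $B$ versus linear growth along $B$). The remaining steps are routine Grönwall inequalities and optimal-transport bookkeeping.
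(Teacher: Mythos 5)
Your proposal is correct and follows essentially the same route as the paper: your renormalization $(\mathcal X,W)=(X-c(t)W,\,e^{-tA}V)$ is exactly the paper's composition with the backward magnetized free flow ($e^{-tA}=R_\omega(t)$ and $-c(t)e^{-tA}=D_\omega(t)/\omega$), your quantity $G+N$ is the paper's $Q$, and the Grönwall factor $4H(1+\gamma(t)^2)$, the prefactor $(1+\gamma(t))$, and the minimum with the classical Dobrushin bound all match the paper's argument.
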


Now we give three important remarks.

\begin{remark}
	Notice that we recover the same type of estimate as in \cite[theorem 2.1]{I22} when $\omega \to 0$ because $\frac{2(1-\cos(\omega t))}{\omega^2} \underset{\omega \to 0}{\longrightarrow} t^2$ and $\frac{2(t-\frac{\sin(\omega t)}{\omega})}{\omega^2} \underset{\omega \to 0}{\longrightarrow} \frac{t^3}{3}$.
\end{remark}
\begin{remark}
	The stability estimates obtained in \cref{theo:2} for the $1$-Wasserstein metric can be easily generalized to all $p$-Wasserstein metric for $1<p<+\infty$.
\end{remark}
\begin{remark}
	A Dobrushin type estimate can also be written if $K$ is a potential with only log-Lipschitz regularity. In this case, we obtain a log-Lipschitz differential inequality in the proof instead of a classical Gr\"onwall inequality. Thus, the improvement stated in \cref{theo:2} can also be generalized to this situation.
\end{remark}
Before giving the proof, we introduce some useful quantities and discuss the relevance of \cref{theo:2}.

As usual for the Vlasov equation, the characteristics of \eqref{eq:Vlasov} are defined by
\begin{equation*}\label{def:charaVlasov}
\left\{
\begin{aligned}
& \frac{d}{ds}X(s;t,x,v)=V(s;t,x,v),\\
& \frac{d}{ds}V(s;t,x,v)=\nabla(K \ast \rho_f)(t,X(s;t,x,v))+V(s;t,x,v) \wedge B,
\end{aligned}
\right.
\end{equation*}
with 
\begin{equation*}
\left(X(t;t,x,v),V(t;t,x,v)\right)=(x,v).
\end{equation*}
We call \textit{the magnetized free-transport equation} the Vlasov equation without the nonlinear term $F[f]\cdot \nabla_{v} f$
\begin{equation}\label{eq:ftB}
\partial_t f + v\cdot \nabla_x f + \left(v \wedge B\right) \cdot \nabla_{v} f= 0,
\end{equation}
and we also write its associated characteristics
\begin{equation}\label{def:charaftB}
\left\{
\begin{aligned}
& \frac{d}{ds}X_\omega(s;t,x,v)=V_\omega(s;t,x,v),\\
& \frac{d}{ds}V_\omega(s;t,x,v)=V_\omega(s;t,x,v) \wedge B,
\end{aligned}
\right.
\end{equation}
with 
\begin{equation*}
\left(X_\omega(t;t,x,v),V_\omega(t;t,x,v)\right)=(x,v).
\end{equation*}
Since the external magnetic field is constant, we can solve the above ordinary differential equation explicitly to obtain the following expression for the characteristics of \eqref{eq:ftB}.

For $d=2$, we have
\begin{equation*}\label{eq:charaftB32}
\left\{
\begin{aligned}
& V_\omega(s;t,x,v)=  \begin{pmatrix}
v_1\cos(\omega (s-t))+v_2\sin(\omega (s-t))\\
-v_1\sin(\omega (s-t))+v_2\cos(\omega (s-t))
\end{pmatrix},\\
& X_\omega(s;t,x,v)=\begin{pmatrix}
x_1+\frac{v_1}{\omega}\sin(\omega (s-t))+\frac{v_2}{\omega}(1-\cos(\omega (s-t)))\\
x_2+\frac{v_1}{\omega}(\cos(\omega (s-t))-1)+\frac{v_2}{\omega}\sin(\omega (s-t))
\end{pmatrix}.
\end{aligned}
\right.
\end{equation*}

For $d=3$, we have
\begin{equation*}\label{eq:charaftB3}
\left\{
\begin{aligned}
& V_\omega(s;t,x,v)=  \begin{pmatrix}
v_1\cos(\omega (s-t))+v_2\sin(\omega (s-t))\\
-v_1\sin(\omega (s-t))+v_2\cos(\omega (s-t))\\
v_3
\end{pmatrix},\\
& X_\omega(s;t,x,v)=\begin{pmatrix}
x_1+\frac{v_1}{\omega}\sin(\omega (s-t))+\frac{v_2}{\omega}(1-\cos(\omega (s-t)))\\
x_2+\frac{v_1}{\omega}(\cos(\omega (s-t))-1)+\frac{v_2}{\omega}\sin(\omega (s-t))\\
x_3+v_3(s-t)
\end{pmatrix}.
\end{aligned}
\right.
\end{equation*}

Thus we can write the solution of $f^\omega$ \eqref{eq:ftB}
\begin{equation}\label{eq:solftB}
 f^\omega(t,x,v)=f^{in}(X_\omega(0;t,x,v),V_\omega(0;t,x,v)).
\end{equation}

As stated above, the estimates in \cref{theo:2} are optimal in the regime when $H$ is small, contrary to the classical Dobrushin estimate. Indeed in the regime  $\norme{D^2K}_\infty \eqqcolon H \ll 1$, we would like that our estimate encodes the fact that the solutions are close to \eqref{eq:solftB} (with different initial datum). In terms of the $1$-Wasserstein distance, this just translates to  $W_1(f_1(t),f_2(t)) \approx W_1(f_1^\omega(t),f_2^\omega(t))$ with $f_1^\omega$ given by \eqref{eq:solftB} with initial data $f_1^\omega(0)$, and $f_2^\omega$ given by \eqref{eq:solftB} with initial data $f_2^\omega(0)$.  From the expression of the characteristics of the magnetized free-transport equation $X_\omega, V_\omega$, we observe that for $d=3$, $W_1(f_1^\omega(t),f_2^\omega(t)) = O(t)$, and for $d=2$, $W_1(f_1^\omega(t),f_2^\omega(t)) = O(1)$. This leads us to give the following key remark.

\begin{remark}
	We notice that the above bounds are optimal when $H=0$. Indeed, in this regime \cref{theo:2} gives us that $W_1(f_1(t),f_2(t)) \leq \frac{2}{\omega}+1=O(1)$ for $d=2$, and $W_1(f_1(t),f_2(t)) \leq t+\frac{2}{\omega}+1=O(t)$ for $d=3$, which are the expected bounds on $W_1(f_1(t),f_2(t))$. It also provides a better estimate than the usual Dobrushin bound in the small $H$ and small $t$ regime.
\end{remark}
Before giving the proof, we quickly recall how the classical Dobrushin estimate is obtained to show the improvement made in \cref{theo:2}. We also highlight how this estimate doesn't change with the dimension and is the same in both magnetized and unmagnetized frameworks.

\begin{theorem}[Dobrushin stability estimate]\label{theo:Dobru}
Let $f_1,f_2$ be two solutions to \eqref{eq:Vlasov} with either $d=2$ or $d=3$. Then the 1-Wasserstein distance $W_1(f_1(t),f_2(t))$ between $f_1$ and $f_2$ satisfies
\begin{equation}\label{ineq:Dob}
W_1(f_1(t),f_2(t) \leq e^{(1+2H)t} W_1(f_1(0),f_2(0).
\end{equation}
\end{theorem}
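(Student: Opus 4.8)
I would run the classical Lagrangian (Dobrushin) argument, the whole point being that in the magnetized case the extra Lorentz term simply drops out, which is why the bound coincides with the unmagnetized one and is dimension‑free. First I would fix an optimal coupling $\pi_0 \in \Pi(f_1(0),f_2(0))$ for the $W_1$ cost and transport it along the joint characteristic flow of \eqref{eq:Vlasov}: writing $(X_i(t),V_i(t))$ for the characteristic associated with $f_i$ issued at time $0$ from $(x,v)$ for $i=1$ and from $(y,w)$ for $i=2$ (these flows are well defined since $\norme{D^2K}_\infty = H <\infty$ makes $F[f_i](t,\cdot)$ Lipschitz), the push‑forward $\pi_t \coloneqq \big((X_1,V_1),(X_2,V_2)\big)(t)_\#\pi_0$ is a coupling of $f_1(t)$ and $f_2(t)$ because \eqref{eq:Vlasov} is a conservative transport equation. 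Setting
\[
D(t) \coloneqq \int \big(\abs{X_1(t)-X_2(t)}+\abs{V_1(t)-V_2(t)}\big)\,d\pi_0,
\]
one has $W_1(f_1(t),f_2(t)) \leq D(t)$ and $D(0)=W_1(f_1(0),f_2(0))$, so it is enough to prove $D(t)\leq e^{(1+2H)t}D(0)$.

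Then I would differentiate the integrand along the flow. From $\dot X_i = V_i$ we get $\frac{d}{dt}\abs{X_1-X_2}\leq \abs{V_1-V_2}$. For the velocities, $\frac{d}{dt}(V_1-V_2) = F[f_1](t,X_1)-F[f_2](t,X_2) + (V_1-V_2)\wedge B$; differentiating $\tfrac12\abs{V_1-V_2}^2$ and using $(V_1-V_2)\cdot\big[(V_1-V_2)\wedge B\big]=0$ eliminates the magnetic contribution, leaving $\frac{d}{dt}\abs{V_1-V_2}\leq \abs{F[f_1](t,X_1)-F[f_2](t,X_2)}$ (valid where $V_1\neq V_2$, the complementary set being harmless if one works with upper Dini derivatives). \textbf{This orthogonality is precisely why the estimate does not see $\omega$ and is the same in $d=2$ and $d=3$.} I would then split
\[
\abs{F[f_1](t,X_1)-F[f_2](t,X_2)} \leq \abs{F[f_1](t,X_1)-F[f_1](t,X_2)} + \abs{\big(F[f_1]-F[f_2]\big)(t,X_2)} \leq H\abs{X_1-X_2} + H\,W_1(\rho_{f_1}(t),\rho_{f_2}(t)),
\]
using $\mathrm{Lip}\big(F[f_1](t,\cdot)\big)\leq \norme{D^2K}_\infty\norme{\rho_{f_1}(t)}_{L^1}=H$ for the first term, and for the second that $(F[f_1]-F[f_2])(t,\cdot)=\nabla K\ast(\rho_{f_1}(t)-\rho_{f_2}(t))$ together with Kantorovich--Rubinstein duality, since $\nabla K$ is $H$‑Lipschitz. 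Finally $W_1(\rho_{f_1}(t),\rho_{f_2}(t))\leq \int\abs{X_1(t)-X_2(t)}\,d\pi_0 \leq D(t)$ because $\rho_{f_i}(t)$ is the position marginal of $f_i(t)$.

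Integrating the pointwise bounds against $\pi_0$ and writing $D_X(t),D_V(t)$ for the position and velocity parts of $D$, I obtain $D_X'\leq D_V$ and $D_V'\leq H D_X + H D \leq 2H D_X + H D_V$, hence $D'\leq (1+2H)D$; Grönwall's lemma together with $D(0)=W_1(f_1(0),f_2(0))$ yields \eqref{ineq:Dob}. The argument is essentially routine; the only steps that need any care are the vanishing of the magnetic term (handled by the orthogonality above) and, technically, the justification of differentiating under the integral sign, which follows from the standard well‑posedness of the characteristic system under $\norme{D^2K}_\infty<\infty$ and the Dini‑derivative remark for the non‑smooth norms. I record this estimate only as a baseline against which the gain of \cref{theo:2} in the small‑$H$, small‑$t$ regime is to be measured.
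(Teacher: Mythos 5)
Your proposal is correct and follows essentially the same route as the paper: the same coupled-characteristics quantity $\int\big(\abs{X_1-X_2}+\abs{V_1-V_2}\big)\,d\pi_0$, the same key cancellation $(V_1-V_2)\cdot\big[(V_1-V_2)\wedge B\big]=0$ removing the magnetic term, and the same Dobrushin bound $2H\int\abs{X_1-X_2}\,d\pi_0$ on the interaction difference (which you merely spell out via the Lipschitz/Kantorovich--Rubinstein splitting where the paper cites it as standard), followed by Gr\"onwall.
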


\begin{proof}
As usual, we consider $\pi_0$ an optimal $W_1$ coupling between $f_1(0)$ and $f_2(0)$ so that we can bound $W_1(f_1(t),f_2(t))$ by a quantity $N(t)$ depending on the characteristics of \eqref{eq:Vlasov}
\begin{align*}
N(t)\coloneqq \int_{(\mathbb{T}^d \times \R^d)^2} \abs{X_1(t,x,v)-X_2(t,y,w)}+\abs{V_1(t,x,v)-V_2(t,y,w)} d\pi_0(x,v,y,w).
\end{align*}

By definition of the $1$-Wasserstein distance we have $W_1(f_1(t),f_2(t) \leq N(t)$. Differentiating $N$, we make the basic observation
\begin{align*}
&\frac{d}{dt}\int_{(\mathbb{T}^d \times \R^d)^2} \abs{V_1(t)-V_2(t)}d\pi_0 = \int_{(\mathbb{T}^d \times \R^d)^2} \frac{\left(V_1-V_2\right)\cdot \left(\dot{V}_1-\dot{V}_2\right)}{\abs{V_1-V_2}}d\pi_0\\
&= \int_{(\mathbb{T}^d \times \R^d)^2} \frac{\left(V_1-V_2\right)\cdot \left(\nabla (K \ast \rho_{f_1})(t,X_1)-\nabla (K \ast \rho_{f_2})(t,X_2) +(V_1-V_2)\wedge B\right)}{\abs{V_1-V_2}}d\pi_0\\
&=\int_{(\mathbb{T}^d \times \R^d)^2} \frac{\left(V_1-V_2\right)\cdot \left(\nabla (K \ast \rho_{f_1})(t,X_1)-\nabla (K \ast \rho_{f_2})(t,X_2)\right)}{\abs{V_1-V_2}}d\pi_0.
\end{align*}
So in the end we can bound the derivative of $N$ just like in the unmagnetized case
\begin{align*}
\frac{d}{dt} N(t) &\leq \int_{(\mathbb{T}^d \times \R^d)^2} \abs{\dot{X}_1-\dot{X}_2}+\abs{\dot{V}_1-\dot{V}_2} d\pi_0\\
& \leq N(t)+\int_{(\mathbb{T}^d \times \R^d)^2} \abs{\nabla (K \ast \rho_{f_1})(t,X_1)-\nabla (K \ast \rho_{f_2})(t,X_2)} d\pi_0.
\end{align*}
We can bound the second term in the inequality above using the standard argument by Dobrushin \cite{D79,G16} which gives us
\begin{equation}
\int_{(\mathbb{T}^d \times \R^d)^2} \abs{\nabla (K \ast \rho_{f_1})(t,X_1)-\nabla (K \ast \rho_{f_2})(t,X_2)} d\pi_0 \leq 2H N(t).
\end{equation}
The inequality \eqref{ineq:Dob} immediately follows from the above estimate, the inequality on $\frac{d}{dt} N(t)$ and the fact that the $1$-Wasserstein is bounded by $N$.
\end{proof}
\begin{remark}
The Dobrushin estimate for solutions to \eqref{eq:Vlasov} isn't optimal in the small $H$ regime, because if $H=0$ then \cref{theo:Dobru} only tells us that $W_1(f_1(t),f_2(t) \leq O(e^t)$. As said above, in reality in this regime $W_1(f_1^\omega(t),f_2^\omega(t)) \approx O(1)$ for $d=2$, and $W_1(f_1(t),f_2(t) \approx O(t)$ for $d=3$.
\end{remark}

\subsection{Proof of \cref{theo:2}}\label{sec:VwBproof}

As usual we write $X_i,V_i$ the characteristics associated to $f_i$ and we consider $\pi_0$ an optimal $W_1$-coupling between $f_1(0)$ and $f_2(0)$. We define the quantity $Q$ defined for $t\geq 0$ by
\begin{equation*}\label{def:Q}
Q(t) \coloneqq \int_{(\mathbb{T}^d \times \R^d)^2} \left(\abs{X_\omega(0;t,X_1,V_1)-X_\omega(0;t,X_2,V_2)}+\abs{V_\omega(0;t,X_1,V_1)-V_\omega(0;t,X_2,V_2)}\right)d\pi_0.
\end{equation*}

Note that 
\begin{align*}
Q(0)&=\int_{(\mathbb{T}^d \times \R^d)^2}\left(\abs{X_\omega(0;0,x,v)-X_\omega(0;0,y,w)}+\abs{V_\omega(0;0,x,v)-V_\omega(0;0,y,w)}\right)d\pi_0(x,v,y,w)\\
&= \int_{(\mathbb{T}^d \times \R^d)^2}\left(\abs{x-y}+\abs{v-w}\right)d\pi_0(x,v,y,w)=W_1(f_1(0),f_2(0)).
\end{align*}
We also note that since $V_\omega(0;t,x,v)= R_\omega(t) v$ with
\begin{equation*}
R_\omega(t) \coloneqq
\left\{
\begin{aligned}
& 
\begin{pmatrix}
\cos(\omega t) & -\sin(\omega t)\\
\sin(\omega t) & \cos(\omega t)
\end{pmatrix} \quad \mbox{ for } d=2,\\
&  
\begin{pmatrix}
\cos(\omega t) & -\sin(\omega t) & 0\\
\sin(\omega t) & \cos(\omega t) &0\\
0 & 0 & 1
\end{pmatrix} \quad \mbox{ for } d=3,
\end{aligned}
\right.
\end{equation*}

the matrix of a rotation of angle $\omega t$. Then we have 
\begin{align*}
\abs{V_\omega(0;t,X_1,V_1)-V_\omega(0;t,X_2,V_2)}=\abs{R_\omega(t)(V_1-V_2)}=\abs{V_1-V_2},
\end{align*}
which of course means that
\begin{equation*}
Q(t) = \int_{(\mathbb{T}^d \times \R^d)^2} \left(\abs{X_\omega(0;t,X_1,V_1)-X_\omega(0;t,X_2,V_2)}+\abs{V_1-V_2}\right)d\pi_0.
\end{equation*}

Now we wish to find a Gr\"onwall type inequality for $Q$, and so we need to compute the derivatives of $X_\omega(0;t,X_i,V_i)$, which we can write in the following way
\begin{equation*}
X_\omega(0;t,X,V)=X+\frac{D_\omega(t)}{\omega}V,
\end{equation*}
with
\begin{equation*}
D_\omega(t) \coloneqq
\left\{
\begin{aligned}
& 
\begin{pmatrix}
-\sin(\omega t) & 1-\cos(\omega t)\\
\cos(\omega t)-1 & -\sin(\omega t)
\end{pmatrix} \quad \mbox{ for } d=2,\\
&  
\begin{pmatrix}
-\sin(\omega t) & 1-\cos(\omega t) & 0\\
\cos(\omega t)-1 & -\sin(\omega t) & 0\\
0 & 0 & -\omega t
\end{pmatrix} \quad \mbox{ for } d=3.
\end{aligned}
\right.
\end{equation*}

This means that
\begin{align*}
\frac{d}{dt}\left(X_\omega(0;t,X,V)\right)&=\dot{X}+\frac{\dot{D_\omega}(t)}{\omega}V+\frac{D_\omega(t)}{\omega}\dot{V}\\
&= V+\frac{\dot{D_\omega}(t)}{\omega}V+\frac{D_\omega(t)}{\omega}(F[f](t;0,X)+V \wedge B).
\end{align*}
Now we observe that

\begin{equation*}
\frac{\dot{D_\omega}(t)}{\omega}V \coloneqq
\left\{
\begin{aligned}
& 
\begin{pmatrix}
-\cos(\omega t)V_1+ \sin(\omega t)V_2\\
-\sin(\omega t)V_1-\cos(\omega t)V_2
\end{pmatrix} \quad \mbox{ for } d=2,\\
&  
\begin{pmatrix}
-\cos(\omega t)V_1+ \sin(\omega t)V_2\\
\sin(\omega t)V_1-\cos(\omega t)V_2 \\
-V_3
\end{pmatrix} \quad \mbox{ for } d=3,
\end{aligned}
\right.
\end{equation*}
and
\begin{equation*}
\frac{D_\omega(t)}{\omega}(V \wedge B) \coloneqq
\left\{
\begin{aligned}
& 
\begin{pmatrix}
(\cos(\omega t)-1)V_1- \sin(\omega t)V_2\\
\sin(\omega t)V_1+(\cos(\omega t)-1)V_2
\end{pmatrix} \quad \mbox{ for } d=2,\\
&  
\begin{pmatrix}
(\cos(\omega t)-1)V_1- \sin(\omega t)V_2\\
\sin(\omega t)V_1+(\cos(\omega t)-1)V_2 \\
0
\end{pmatrix} \quad \mbox{ for } d=3.
\end{aligned}
\right.
\end{equation*}
Finally we obtain 
\begin{equation*}
\frac{d}{dt}\left(X_\omega(0;t,X,V)\right)=\frac{D_\omega(t)}{\omega}F[f](t;0,X).
\end{equation*}
For any vector $x \in \mathbb{T}^d$, we have
\begin{equation}\label{eq:Domega}
\abs{\frac{D_\omega(t)}{\omega}x}=
\left\{
\begin{aligned}
&\sqrt{2(x_1^2+x_2^2)\frac{1-\cos(\omega t)}{\omega^2}} \quad \mbox{ for } d=2,\\
&\sqrt{2(x_1^2+x_2^2)\frac{1-\cos(\omega t)}{\omega^2}+t^2x_3^2} \quad \mbox{ for } d=3.
\end{aligned}
\right.
\end{equation}
\begin{remark}
Thanks to the previous equality and the expressions of $X_\omega,V_\omega$, we notice that we recover the same identity as in \cite{I22} when $\omega \to 0$, namely
\begin{equation*}
\abs{\frac{d}{dt}\left(X-tV)\right)}=t\abs{F[f]}.
\end{equation*}
\end{remark}
Thanks to \eqref{eq:Domega}, we can write
\begin{equation}\label{ineq:Domega}
\abs{\frac{D_\omega(t)}{\omega}x} \leq
\left\{
\begin{aligned}
&\sqrt{\frac{2(1-\cos(\omega t))}{\omega^2}}\abs{x} \quad \mbox{ for } d=2,\\
&\sqrt{\frac{2(1-\cos(\omega t))}{\omega^2}+t^2}\abs{x} \quad \mbox{ for } d=3.
\end{aligned}
\right.
\end{equation}
Before estimating $\dot{Q}$, we describe how $W_1(f_1(t),f_2(t))$ is controlled by $Q(t)$, we have
\begin{align*}
W_1(f_1(t),f_2(t)) &\leq \int_{(\mathbb{T}^d \times \R^d)^2} \abs{X_1(t,x,v)-X_2(t,y,w)}+\abs{V_1(t,x,v)-V_2(t,y,w)} d\pi_0(x,v,y,w)\\
& \leq \int_{(\mathbb{T}^d \times \R^d)^2} \left(\abs{X_\omega(0;t,X_1,V_1)-X_\omega(0;t,X_2,V_2)}+\abs{V_1-V_2}\right)d\pi_0\\
&+\int_{(\mathbb{T}^d \times \R^d)^2} \abs{X_\omega(0;t,X_1,V_1)-X_1-(X_\omega(0;t,X_2,V_2)-X_2)}d\pi_0\\
&= Q(t)+\int_{(\mathbb{T}^d \times \R^d)^2} \abs{\frac{D_\omega(t)}{\omega}(V_1-V_2)}d\pi_0.
\end{align*}
Thanks to \eqref{ineq:Domega}, we obtain the following estimate on $W_1(f_1(t),f_2(t))$

\begin{equation}\label{ineq:W1Q}
W_1(f_1(t),f_2(t)) \leq
\left\{
\begin{aligned}
&\left(\sqrt{\frac{2(1-\cos(\omega t))}{\omega^2}}+1\right)Q(t) \quad \mbox{ for } d=2,\\
&\left(\sqrt{\frac{2(1-\cos(\omega t))}{\omega^2}+t^2}+1\right)Q(t) \quad \mbox{ for } d=3.
\end{aligned}
\right.
\end{equation}

We can finally write and estimate the derivative of $Q$.
\begin{align*}
\frac{d}{dt} Q(t) &= \int_{(\mathbb{T}^d \times \R^d)^2} \left(\frac{\left(X_\omega(0;t,X_1,V_1)-X_\omega(0;t,X_2,V_2)\right)\cdot \left(\dot{X}_\omega(0;t,X_1,V_1)-\dot{X}_\omega(0;t,X_2,V_2)\right)}{\abs{X_\omega(0;t,X_1,V_1)-X_\omega(0;t,X_2,V_2)}}\right.\\
&+\left.\frac{\left(V_1-V_2\right)\cdot \left(\dot{V}_1-\dot{V}_2\right)}{\abs{V_1-V_2}}\right)d\pi_0\\
&= \int_{(\mathbb{T}^d \times \R^d)^2} \left(\frac{\left(X_\omega(0;t,X_1,V_1)-X_\omega(0;t,X_2,V_2)\right)\cdot \left(\frac{D_\omega(t)}{\omega}\left(F[f_1](t;0,X_1)-F[f_2](t;0,X_2)\right)\right)}{\abs{X_\omega(0;t,X_1,V_1)-X_\omega(0;t,X_2,V_2)}}\right.\\
&+\left.\frac{\left(V_1-V_2\right)\cdot \left(F[f_1](t;0,X_1)-F[f_2](t;0,X_2)+(V_1-V_2)\wedge B\right)}{\abs{V_1-V_2}}\right)d\pi_0\\
\end{align*}
We see the term $\left(V_1-V_2\right)\cdot(V_1-V_2)\wedge B=0$ appear in the second term in the above equality. This means we can finally estimate the derivative of $Q$ using only the interaction term $F[f_i](t;0,X_i)$ as was the goal.

Thus, using \eqref{ineq:Domega}, we obtain
\begin{equation*}
\frac{d}{dt} Q(t) \leq \left\{
\begin{aligned}
&\left(\sqrt{\frac{2(1-\cos(\omega t))}{\omega^2}}+1\right)\int_{(\mathbb{T}^d \times \R^d)^2}\abs{F[f_1](t;0,X_1)-F[f_2](t;0,X_2)}d\pi_0 \quad \mbox{ for } d=2,\\
&\left(\sqrt{\frac{2(1-\cos(\omega t))}{\omega^2}+t^2}+1\right)\int_{(\mathbb{T}^d \times \R^d)^2}\abs{F[f_1](t;0,X_1)-F[f_2](t;0,X_2)}d\pi_0 \quad \mbox{ for } d=3.
\end{aligned}
\right.
\end{equation*}
We bound the differences of the interaction fields using the same method as in \cite{I22}, using the fact that $\nabla K$ is $H$-Lipschitz, to obtain
\begin{equation*}
\int_{(\mathbb{T}^d \times \R^d)^2}\abs{F[f_1](t;0,X_1)-F[f_2](t;0,X_2)}d\pi_0 \leq 2H \int_{(\mathbb{T}^d \times \R^d)^2}\abs{X_1-X_2}d\pi_0.
\end{equation*}
Using \eqref{ineq:W1Q}, we get
\begin{equation*}
\frac{d}{dt} Q(t) \leq \left\{
\begin{aligned}
&2H\left(\sqrt{\frac{2(1-\cos(\omega t))}{\omega^2}}+1\right)^2 Q(t) \quad \mbox{ for } d=2,\\
&2H\left(\sqrt{\frac{2(1-\cos(\omega t))}{\omega^2}+t^2}+1\right)^2 Q(t) \quad \mbox{ for } d=3.
\end{aligned}
\right.
\end{equation*}
Using the simple inequality $(x+y)^2 \leq 2(x^2+y^2)$, we finally obtain the desired Gr\"onwall inequality on  $Q$
\begin{equation*}
\frac{d}{dt} Q(t) \leq \left\{
\begin{aligned}
&4H\left(\frac{2(1-\cos(\omega t))}{\omega^2}+1\right) Q(t) \quad \mbox{ for } d=2,\\
&4H\left(\frac{2(1-\cos(\omega t))}{\omega^2}+t^2+1\right) Q(t) \quad \mbox{ for } d=3.
\end{aligned}
\right.
\end{equation*}
Therefore we obtain the following bound on $Q$
\begin{equation*}
Q(t) \leq \left\{
\begin{aligned}
&e^{4H\left(\frac{2(t-\frac{\sin(\omega t)}{\omega})}{\omega^2}+t\right)} Q(0) \quad \mbox{ for } d=2,\\
&e^{4H\left(\frac{2(t-\frac{\sin(\omega t)}{\omega})}{\omega^2}+\frac{t^3}{3}+t\right)} Q(0) \quad \mbox{ for } d=3.
\end{aligned}
\right.
\end{equation*}
Finally, we get the expected estimate on the $1$-Wasserstein distance between $f_1(t)$ and $f_2(t)$.
\begin{equation*}
W_1(f_1(t),f_2(t)) \leq \left\{
\begin{aligned}
&\left(\sqrt{\frac{2(1-\cos(\omega t))}{\omega^2}}+1\right)e^{4H\left(\frac{2(t-\frac{\sin(\omega t)}{\omega})}{\omega^2}+t\right)} W_1(f_1(0),f_2(0)) \quad \mbox{ for } d=2,\\
&\left(\sqrt{\frac{2(1-\cos(\omega t))}{\omega^2}+t^2}+1\right)e^{4H\left(\frac{2(t-\frac{\sin(\omega t)}{\omega})}{\omega^2}+\frac{t^3}{3}+t\right)} W_1(f_1(0),f_2(0)) \quad \mbox{ for } d=3.
\end{aligned}
\right.
\end{equation*}

\textbf{Acknowledgments:} I would like to thank Mikaela Iacobelli for interesting and useful discussions, and the anonymous referee for comments which improved the quality of this work.
\printbibliography

\end{document}